% Upper-case    A B C D E F G H I J K L M N O P Q R S T U V W
% Lower-case    a b c d e f g h i j k l m n o p q r s t u v
% Digits    0 1 2 3 4 5 6 7 8 9 % Exclamation   !
% Double quote "  Hash (number) # % Dollar    $
% Percent  %  Ampersand & % Acute accent  '
% Left paren   (  Right paren   ) % Asterisk  *
% Plus     +  Comma     , % Minus     -
% Point    .  Solidus   / % Colon     :
% Semicolon    ;  Less than < % Equals    =
% Greater than >  Question mark ?  % At    @
% Left bracket [  Backslash \ % Right bracket ]
% Circumflex   ^  Underscore    _ % Grave accent  `
% Left brace   {  Vertical bar  | % Right brace   }
% Tilde    ~
%%%%%%%%%%%%%%%%%%%%%%%%%%%%%%%%%%%%%%%%%%%%%%%%%%%%%%%%%%%%%%%%%%%
%
% Format LaTeX2e
%
%%%%%%%%%%%%%%%%%%%%%%%%%%%%%%%%%%%%%%%%%%%%%%%%%%%%%%%%%%%%%%%%%%%

\documentclass[11pt,a4paper]{amsart}
\usepackage{amsfonts}
\usepackage[active]{srcltx}
\usepackage[notref,notcite]{showkeys}
\usepackage{enumerate}

\usepackage{amsmath,amssymb,xspace,amsthm}
\newtheorem{theorem}{Theorem}%[section]
%[section]
%[section]
%[section]
%[section]
\newtheorem{corollary}[theorem]{Corollary}%[section]
%[section]
\numberwithin{equation}{section}

\def\Re{\operatorname{Re}}
\def\Im{\operatorname{Im}}

\newcommand{\C}{\ensuremath{\mathbb C}\xspace}

\newcommand{\R}{\ensuremath{\mathbb{R}}\xspace}

\renewcommand{\phi}{\varphi}

\renewcommand{\leq}{\leqslant}
\renewcommand{\geq}{\geqslant}

\begin{document}
\title[simple quaternionic polynomials]{A new method of finding all Roots of simple quaternionic polynomials}
\author{Lianggui Feng and  Kaiming Zhao}
%\date{}
\thanks{K.M. Zhao is partially supported by NSERC and NSF of China
(Grant 10871192), and L.G. Feng is partially supported by HNSF of
China (Grant 11JJ7002).}
 \maketitle
\begin{abstract}
In this paper, we provide a new method to find all zeros of
polynomials with quaternionic coefficients located on only one side
of the powers of the variable (these polynomials are called simple
polynomials). This method is much more  efficient and much simpler
than the known one in \cite{JO1}. We  recover several known results,
and deduce several  interesting consequences concerning solving
equations with all real coefficients or complex coefficients which
do not seem to be deduced easily from the results in \cite{JO1}.
 We also give a necessary and sufficient condition for a simple quaternionic
polynomials to have finitely many solutions (only isolated
solutions).\end{abstract}

{\small{\small \vskip 10pt \noindent {\em Keywords:} quaternion,
simple quaternionic polynomial, root}}

{\small {\small\vskip 5pt \noindent {\em 2000 Mathematics Subject
Classification.}{\small 11R52, 12E15, 12Y05, 65H05}}} \vskip 10pt

\section{Introduction}

The quaternion algebra plays an important role in many subjects,
such as, quaternionic quantum mechanics \cite{A}, and signal
precessing \cite{BM,JO5}. Because of the noncommutativity of
quaternionic multiplication, solving a quaternionic equation of
degree $n$ becomes a challenging problem [3-19].

\vskip 5pt
 Niven in \cite{N,N2} made first steps in generalizing the
fundamental theorem  of algebra onto quaternionic situation which
led to the article by Eilenberg and Niven \cite{EN} where the
existence of roots for a quaternion equation of degree $n$ was
proved using strongly topological methods. After that, Topuridze in
\cite{T}, also with help of topological method,  showed that the
zero set of  polynomials with quaternionic coefficients located on
only one side  of the power of the single variable (these
polynomials are called {\it simple polynomials}) consists of a
finite number of points and Euclidean spheres of corresponding
dimension.

\vskip 5pt Concerning about the computation of roots of a
quaternionic polynomial, the first numerically working algorithm to
find a root was presented by Ser$\hat{o}$dio, Pereira, and
Vit$\acute{o}$ria \cite{SPV}, and further contributions were made by
Ser$\hat{o}$dio and Siu \cite{SS}, Pumpl$\ddot{u}$n and Walcher
\cite{PW}, De Leo, Ducati, and Leonardi \cite{DDL}, Gentili and
Struppa \cite{GS}, Gentili, Struppa, and Vlacci \cite{GSV},  Gentili
and Stoppato \cite{GSt}. A large bibliography on quaternions in
general was given by Gsponer and Hurni in 2006 \cite{GH}. Recently,
Janovsk$\acute{a}$ and Opfer presented a  method in \cite{JO1} for
producing all zeros of a simple quaternionic polynomial, by using
the  real companion polynomial introduced for the first time by
Niven \cite{N}, the number one introduced in \cite{PS}, and the
presentation of the powers of a quaternion as a real, linear
combination of the quaternion. Let us recall some notions first.

\vskip 5pt Throughout this paper, let $\mathbb{N}$  be the set of
positive integers, $\mathbb{R}$  the real number field,
$\mathbb{C}=\mathbb{R}\oplus \mathbb{R}\mathbf{i}$  the complex
number field, and $\mathbb{H}$ the skew-field of real quaternions,
that is, any element of $\mathbb{H}$ is of the form $ q=a_0+a_1
\mathbf{i}+a_2 \mathbf{j}+a_3\mathbf{k} =(a_0+a_1 \mathbf{i})+(a_2
+a_3 \mathbf{i})\mathbf{j}$, where
$\mathbf{i},\mathbf{j},\mathbf{k}$ are usual quaternionic imaginary
units, and $a_0,a_1,a_2,a_3\in \mathbb{R}$, and the $\R$-bilinear
product is determined by
$\mathbf{i}^2=\mathbf{j}^2=\mathbf{k}^2=-1,$
$\mathbf{i}\mathbf{j}=-\mathbf{j}\mathbf{i}=\mathbf{k},\quad
\mathbf{j}\mathbf{k}=-\mathbf{k}\mathbf{j}=\mathbf{i},\quad
\mathbf{k}\mathbf{i}=-\mathbf{i}\mathbf{k}=\mathbf{j}.$
 For
 the above quaternion $q$, we denote by $\Re q$ the real part
of $q$, by $|q|$ the module of $q$ (i.e.
$|q|=\sqrt{a_0^2+a_1^2+a_2^2+a_3^2}$), and by $[q]$
  the {\it conjugate class of} $q$(i.e.
$[q]=\{aqa^{-1}|a\in\mathbb{H},a\neq 0\}$).  We  confirm that  a
quaternionic polynomial with the coefficients on the same side  of
the power of the single variable are called
 {\it a simple quaternionic polynomial} or a {\it simple polynomial}.

 \vskip 5pt {\bf Definition 1.1.} Let $z_0\in \mathbb{H}$ be a zero of a simple polynomial $p(z)$
as given on the left-hand side in (2.1) (i.e., $p(z_0)=0$). If $z_0$
is not real and has the property that $p(z) = 0$ for all $z \in
[z_0]$, then we will say that $z_0$ is  a {\it spherical zero of
$p(z)$}. If $z_0$ is real or is not a spherical zero, it is called
an {\it isolated zero of $p(z)$}.

 \vskip 5pt In the present paper, we provide a new  method for finding all
zeros of simple polynomials $p(z)$ of arbitrary degree $n$ (Theorems
1 and 4). Our proof is based on two well-known techniques: the
presentation of a quaternion as a $2\times2$ complex matrix, and the
Jordan canonical form of a complex matrix. We first write $p(z)$ so
that its constant term is $1$ or $0$. Then introduce {\it derived
polynomials} $f_1(t)$ and $f_2(t)$ of $p(z)$ which have complex
coefficients, where $t$ is a real variable, such that
$p(t)=f_1(t)+f_2(t)\mathbf{j}$; and  define the {\it discriminant
polynomial} $\tilde
p(t)=f_1(t)\overline{f}_1(t)+f_2(t)\overline{f}_2(t)$ of $p(z)$
which is a polynomial with real coefficients, where $t$ is
considered as a complex variable, the polynomials
$\overline{f}_1(t)$ and $\overline{f}_2(t)$ are obtained by only
taking the conjugate coefficients of $f_1(t)$ and $f_2(t)$
respectively. Then all zeros of $p(z)$ can be obtained from complex
zeros of the discriminant polynomial $\tilde p(t)$. More precisely,
let $z_0\in\C$ such that $\tilde p(z_0)=0$. If $z_0$ is real then it
is an isolated zero of $p(z)$. If $z_0$ is not real and
$f_1(z_0)=f_2(z_0)=\bar f_1(z_0)=\bar f_2(z_0)=0$ then it is a
spherical zero of $p(z)$. If $z_0$ is not real and at least one of
$f_1(z_0), f_2(z_0), \bar f_1(z_0), \bar f_2(z_0)$ is not zero,  let
$\left( \begin{array}{cc} a\\ b
 \end{array}  \right)$ be a unit complex solution of the linear system
$$\left( \begin{array}{cc} f_2(\overline{z_0}) &
f_1(\overline{z_0}) \\ \bar f_1(\overline{z_0}) & -\bar f_2(\overline{z_0}) \\
 \end{array}  \right)X=0.$$ Then corresponding to this $z_0$ we have an
 isolated solution for $p(t)$:
$${|a|^2z_0+|b|^2\overline{z_0}-2b}\overline{a}(\Im
z_0)\mathbf{k}.$$ From the above three cases we obtain all zeros of
$p(z)$.

\ The paper is organized as follows. In Section 2, we prove that our
methods for solving a simple polynomial equation in Theorems 1 and 4
are valid. Then we  recover several known results, and deduce
several very interesting consequences concerning solving equations
with all real coefficients or complex coefficients which do not seem
to be deduced easily from the results in \cite{JO1} (see Corollary
\ref{corollary2} and Corollary \ref{corollary3}). We also give a
necessary and sufficient condition for a simple quaternionic
polynomials to have  finitely many solutions (only isolated
solutions). In Section 3 we give an algorithm to find all zeros of a
simple quaternionic equation, based upon our Theorems 1 and 4. In
Section 4 we give three examples. In particular, we use our method
to redo Example 3.8 in \cite{JO1}. In Section 5, we make some
necessary numerical considerations and compare our algorithm with
that in \cite{JO1}.

\section{Finding all zeros of a simple
polynomial}

We consider the simple quaternionic polynomial equation:
\begin{equation}\label{eq:simple}
q_nx^n+\cdots+q_1x+q_0=0\,\,(q_n\neq 0),
\end{equation}
where $x\in\mathbb{H}$ is the variable, $n\in \mathbb{N}$ and $q_i
\, (i=0,\ldots,n)\in\mathbb{H}$ are given. If $q_0\neq 0$, then
Eq.(\ref{eq:simple}) can be written as
  $$q_0^{-1}q_nx^n+\cdots+q_0^{-1}q_1x+1=0.$$
  Hence, in order to solve Eq.(\ref{eq:simple}), it suffices to solve the following equation
\begin{equation}\label{eq:simple 1}
p_nx^n+\cdots+p_1x+d_0=0,
\end{equation}
where $p_i \, (i=1,\ldots,n)\in\mathbb{H}, p_n\neq 0, d_0=0 \
{\text{or}} \ 1$. We simply denote the left-hand side of (2.2) as
$p(x)$.

 Let $\sigma:\mathbb{H}\to\mathbb{C}^{2\times 2}$,
 $q=z_1+z_2\mathbf{j}\mapsto \left(
 \begin{array}{cc}
  z_1 & z_2 \\
  -\overline{z_2} &\overline{ z_1} \\
 \end{array}
\right),$ where $z_1,z_2\in\mathbb{C}$. Then $\sigma$ is an
$\R$-algebra monomorphism from $ \mathbb{H}$ to $\mathbb{C}^{2\times
2}$. Sometimes, this monomorphism is  also named as the derived
mapping of $\mathbb{H}$, and  $\sigma(q)$ is denoted by $q^\sigma$.
Obviously,   $a^\sigma=\left(  \begin{array}{cc}
 a & 0 \\ 0 & a \\   \end{array} \right)$ for any $a\in \mathbb{R}$.

 Let  $p_i^\sigma=\left( \begin{array}{cc} t_1^{(i)} & t_2^{(i)} \\
  -\overline{t_2^{(i)}} & \overline{t_1^{(i)}} \\   \end{array}
  \right)$ for
  $i=1,\ldots,n,$
  where  $t_1^{(i)},  t_2^{(i)}\in  \mathbb{C}$.  Then (\ref{eq:simple 1})  becomes
  the  following  matrix  equation  in the matrix variable $Y=\left(
 \begin{array}{cc}
 z_1 & z_2 \\
 -\bar{z_2} &\bar{ z_1} \\
 \end{array}
  \right)\,\,(z_1,z_2\in\mathbb{C}$):
  \begin{equation}\label{eq:matrix}
  \begin{array}{c}
   \left(
  \begin{array}{cc}
 t_1^{(n)} & t_2^{(n)} \\
 -\overline{t_2^{(n)}} & \overline{t_1^{(n)}} \\
  \end{array}
  \right)Y^n+\cdots+\left(
  \begin{array}{cc}
 t_1^{(1)} & t_2^{(1)} \\
 -\overline{t_2^{(1)}} & \overline{t_1^{(1)}} \\
  \end{array}
  \right)Y+\left(
  \begin{array}{cc}
 d_0 &0  \\
 0 & d_0 \\
  \end{array}
  \right)=0.\end{array}
  \end{equation}

  \vskip 5pt
  Now we  introduce  a matrix  polynomial  $P(t)$  in real
  variable  $t$ (considered as a real number) as  follows:
  \begin{equation}\label{P(t)} P(t)\equiv
  p_n^\sigma
  t^n+\cdots+p_1^\sigma
  t+d_0I,\end{equation}
  where $I$
  is the $2\times2$
  identity
  matrix.

Write
  $P(t)=Q(t)(tI-Y)+P_l(Y)$,
  where
  $$P_l(Y)= p_n^\sigma
  Y^n+\cdots+p_1^\sigma
  Y+d_0I,$$
  $$
  Q(t)=p_n^\sigma
  t^{n-1}+(p_{n-1}^{\sigma}+p_n^\sigma
Y)t^{n-2}+\cdots+(p_1^{\sigma}+p_2^{\sigma}Y+\cdots
+p_n^{\sigma}Y^{n-1}).$$
  If  $P_l(Y)=0$,  then  $P(t)=Q(t)(tI-Y)$,  and hence
 $$
 \det P(t)=\det Q(t)\det (tI-Y)=\chi_Y(t)\det
 Q(t),$$
 where $\chi_Y(t)$ is the characteristic polynomial of $Y$.

 Set $\tilde p(t)\equiv \det P(t)$. By Cayley-Hamilton
 Theorem, we see that $\tilde p(Y)=0$ for every $Y$ satisfying $P_l(Y)=0$.

  Notice
  that, $\tilde p(t)\equiv
 \det P(t)$ is
 a polynomial in real variable $t$ of degree $2n$ with real
 coefficients, since $\tilde p(t)=\det (p_nt^n+\cdots+p_1t+d_0)^\sigma$, and
 $\det (p_nt^n+\cdots+p_1t+d_0)^\sigma\geq
 0$ for any real value of $t$. Then
\begin{equation}\label{p(t)}\tilde p(t)=b{(t-\xi_1)^{2r_1}\cdots(t-\xi_s)^{2r_s}}
\cdot\hskip 2cm \end{equation}
$$\hskip 2cm \cdot{(t-\eta_1)^{s_1}(t-\overline{\eta_1})^{s_1}}
\cdots{(t-\eta_k)^{s_k}(t-\overline{\eta_k})^{s_k}},$$ where $b$ is
a real number, $\xi_1,\ldots,\xi_s$ are distinct real numbers,
$\eta_1,\bar \eta_1,\ldots,$ $ \eta_k,\bar \eta_k$ are distinct
nonreal complex numbers, and $r_1,r_2,$ $\cdots, r_s, s_1,\cdots, $
$s_k\in \mathbb{N}$. It is clear that $s+k\le n$.

Now suppose $Y$ is a solution of Equation (\ref{eq:matrix}). Then,
$\tilde p(Y)=0$.
  Since $Y$  is  of form $\left(
 \begin{array}{cc}
 z_1 & z_2 \\
 -\overline{z_2} &\overline{ z_1} \\
 \end{array}
  \right)$ where $z_1,z_2\in\mathbb{C}$,
  then $\chi_Y(t)=(t-z_1)(t-\bar z_1)+z_2\bar z_2\ge
  0$ for all real values  of $t$. Consequently,  $Y$ has two equal real
  eigenvalues or two  conjugate complex  eigenvalues.
   Hence, its Jordan canonical  form, $J_Y$,
has to  be of form $aI $  or $\left(
   \begin{array}{cc}
  c &  0\\
 0 & \overline{c} \\
   \end{array}
 \right)$ where $a\in \mathbb{R}$ and $c$
 is a complex with nonzero imaginary  part. Using (\ref{p(t)}) we know
 that, for a solution $Y$ of Equation (\ref{eq:matrix}),  $J_Y$  has to
  be  one of the  following $s+k$ matrices (up to the order of the diagonal entries):
  \begin{equation}\label{J_Y}\xi_1I ,\ldots,\xi_sI ,\left(
  \begin{array}{cc}
 \eta_1 &0 \\
   0  & \bar{\eta_1} \\
  \end{array}
  \right),\ldots,\left(
  \begin{array}{cc}
 \eta_k &0 \\
   0  & \bar{\eta_k} \\
  \end{array}
  \right).\end{equation}
Next we will prove that each of the above cases can occur.

\

{\bf Case 1}:  $J_Y=\xi_iI $ for $i=1,\ldots,s$.

In this case $Y=J_Y$. Clearly,  $Y$  is  a  solution  of Equation
(\ref{eq:matrix})
  iff
  $p_n^\sigma\xi_i^n+\cdots+p_1^\sigma\xi_i+d_0I  =0$, iff  $\xi_i$  is a  common real  root
  of  both  $$t_1^{(n)}x^n+\cdots+t_1^{(1)}x+d_0=0$$  and  $$t_2^{(n)}x^n+\cdots+t_2^{(1)}x=0.$$
  But  $\xi_i$  is  a  real  root  of  $\tilde p(t)$, the
  equalities $t_1^{(n)}\xi_i^n+\cdots+t_1^{(1)}\xi_i+d_0=0$
  and  $t_2^{(n)}\xi_i^n+\cdots+t_2^{(1)}\xi_i=0$  hold  naturally.
  Hence,  $\xi_iI$ $(i=1,\ldots,s)$  is a solution of the  equation
  (\ref{eq:matrix}).

\

  {\bf Case 2}:
  $J_Y=\left(
  \begin{array}{cc}
  \eta_i & 0 \\
  0 & \overline{\eta_i} \\
  \end{array}
  \right)$ for
  $i=1,\ldots,k$.

  We  may  assume  that  (see \cite{ZY})
 $$Y=\left(
  \begin{array}{cc}
  z_1 & z_2 \\
  -\overline{z_2} & \overline{z_1} \\
  \end{array}
   \right)\left(
  \begin{array}{cc}
  \eta_i & 0 \\
  0 & \overline{\eta_i} \\
  \end{array}
  \right)\left(
  \begin{array}{cc}
  z_1 & z_2 \\
  -\overline{z_2} & \overline{z_1} \\
  \end{array}
   \right)^{-1}$$
   for   some   $z_1,   z_2\in \mathbb{C}$  with   $|z_1|^2+   |z_2|^2\neq0$.   Then $Y$
  is  a  solution  of  (\ref{eq:matrix})  if  and  only  if
$$\begin{array}{c}
  p_n^\sigma\left( \begin{array}{cc}  z_1 & z_2 \\  -\overline{z_2} & \overline{z_1} \\
 \end{array}  \right)\left( \begin{array}{cc} \eta_i^{n} & 0 \\ 0 & \overline{\eta_i}^{n} \\
  \end{array}
  \right)\left(
  \begin{array}{cc}
  z_1 & z_2 \\
  -\overline{z_2} & \overline{z_1} \\
  \end{array}
   \right)^{-1}+\cdots \\
  \ \ \ \ \ \ \ \ \ \ \ \ \ +p_1^\sigma\left(
  \begin{array}{cc}
  z_1 & z_2 \\
  -\overline{z_2} & \overline{z_1} \\
  \end{array}
   \right)\left(
  \begin{array}{cc}
  \eta_i^{} & 0 \\
  0 & \overline{\eta_i}^{} \\
  \end{array}
  \right)\left(
  \begin{array}{cc}
  z_1 & z_2 \\
  -\overline{z_2} & \overline{z_1} \\
  \end{array}
   \right)^{-1}+d_0I
   =0,
\end{array}$$
  i.e.,
   \begin{equation}\label{eq:2.4}
  p_n^\sigma\left(
  \begin{array}{cc}
  z_1 & z_2 \\
  -\overline{z_2} & \overline{z_1} \\
  \end{array}
   \right)\left(
  \begin{array}{cc}
  \eta_i^{n} & 0 \\
  0 & \overline{\eta_i}^{n} \\
  \end{array}
  \right)+\cdots \hskip 2cm \end{equation}
$$
\hskip 1cm\begin{array}{cc}
   \ \ \ \ \ & \ \ \ \ \ \ \ +p_1^\sigma\left(
  \begin{array}{cc}
  z_1 & z_2 \\
  -\overline{z_2} & \overline{z_1} \\
  \end{array}
   \right)\left(
  \begin{array}{cc}
  \eta_i^{} & 0 \\
   0  & \overline{\eta_i}^{} \\
  \end{array}
  \right)+d_0\left(
  \begin{array}{cc}
  z_1 & z_2 \\
  -\overline{z_2} & \overline{z_1} \\
  \end{array}
   \right)=0.
  \end{array}
   $$
   In   other   words,   for   any  nonzero   solution $\left(
  \begin{array}{cc}
  z_1 & z_2 \\
  -\overline{z_2} & \overline{z_1} \\
  \end{array}
   \right)$
   of  (\ref{eq:2.4}),   we   obtain  a   solution   for
    (\ref{eq:matrix}) of the following form:
   \begin{equation}\label{eq:2.5}
   \left(
  \begin{array}{cc}
  z_1 & z_2 \\
  -\overline{z_2} & \overline{z_1} \\
  \end{array}
   \right)\left(
  \begin{array}{cc}
  \eta_i^{} & 0 \\
   0  & \overline{\eta_i}^{} \\
  \end{array}
  \right)\left(
  \begin{array}{cc}
  z_1 & z_2 \\
  -\overline{z_2} & \overline{z_1} \\
  \end{array}
   \right)^{-1}.
   \end{equation}

   We   write   (\ref{eq:2.4})   into   two   equations
   $$
   \left\{\begin{array}{cc}
  t_1^{(n)}\eta_i^nz_1+t_2^{(n)}\eta_i^n(-\overline{z_2})
  +\cdots+t_1^{(1)}\eta_iz_1+t_2^{(1)}\eta_i(-\overline{z_2})+d_0z_1=0 &  \\
  t_1^{(n)}\overline{\eta_i}^nz_2+t_2^{(n)}\overline{\eta_i}^n\overline{z_1}+\cdots+t_1^{(1)}
  \overline{\eta_i}z_2+t_2^{(1)}\overline{\eta_i}\overline{z_1}+d_0z_2=0,  \ \ \ \ \ \ \ &
  \end{array}\right.
   $$i.e.,
   \begin{equation}\label{eq:2.6}
  \left\{\begin{array}{cc}
  (t_2^{(n)}\overline{\eta_i}^n+\cdots+t_2^{(1)}\overline{\eta_i})
  \overline{z_1}+(t_1^{(n)}\overline{\eta_i}^n+\cdots+t_1^{(1)}
  \overline{\eta_i}+d_0)z_2=0
   \\ (\overline{t_1^{(n)}}\overline{\eta_i}^n+\cdots
   +\overline{t_1^{(1)}}\overline{\eta_i}+d_0)\overline{z_1}-(\overline{t_2^{(n)}}
   \overline{\eta_i}^n+\cdots+\overline{t_2^{(1)}}\overline{\eta_i})z_2=0.
  \end{array}\right.
   \end{equation}
   Considering
  (\ref{eq:2.6})
   as   a   linear system   in   the  variables
   $X=\left(
   \begin{array}{c}
  \overline{z_1} \\
  z_2 \\
   \end{array}
 \right)$,
 we see that its determinant of  the coefficient matrix  is
 $$D=\left|\begin{array}{cc}
  t_2^{(n)}\overline{\eta_i}^n+\cdots+t_2^{(1)}\overline{\eta_i} & t_1^{(n)}\overline{\eta_i}^n
  +\cdots+t_1^{(1)}\overline{\eta_i}+d_0\\
  \overline{t_1^{(n)}}\overline{\eta_i}^n+\cdots+\overline{t_1^{(1)}}\overline{\eta_i}+d_0 &
  -(\overline{t_2^{(n)}}\overline{\eta_i}^n+\cdots+\overline{t_2^{(1)}}\overline{\eta_i})
  \end{array}\right|.$$
  Since  $\eta_i$  and $\bar\eta_i$  are  roots of
  $$\tilde p(t)= \det P(t)=\left|\begin{array}{cc}
  t_1^{(n)}t^n+\cdots+t_1^{(1)}t+d_0 & t_2^{(n)}t^n+\cdots+t_2^{(1)}t\\
  -(\overline{t_2^{(n)}}t^n+\cdots+\overline{t_2^{(1)}}t) &
  \overline{t_1^{(n)}}t^n+\cdots+\overline{t_1^{(1)}}t+d_0
  \end{array}\right|,$$
  it  follows  that  $D=0$,  which  shows  that  (\ref{eq:2.6})  always  has
  nonzero  solutions.  Since  all  coefficients of  (\ref{eq:2.6})  are  known,  the
  solution  set  of   (\ref{eq:2.6})   in
  $X=\left(
   \begin{array}{c}
  \overline{z_1} \\
  z_2 \\
   \end{array}
 \right)$
 can be given clearly, which will be denoted by $\Gamma_i$.

 If $\eta_i$ and $\overline{\eta_i}$ simultaneously satisfy $t_1^{(n)}x^n+\cdots+t_1^{(1)}x+d_0=0$ and
 $t_2^{(n)}x^n+\cdots+t_2^{(1)}x=0$, then (2.9) becomes trivial,  and $\Gamma_i=\mathbb{C}^{2\times1}$.
 Consequently, any element  of the form  (2.8) is a solution of (2.3).

 Now suppose (2.9) is nontrivial. Then $\Gamma_i$ is of   dimension 1,   and  $\Gamma_i=\left\{z\left(
  \begin{array}{c}
 a^{(i)} \\
 b^{(i)} \\
  \end{array}
  \right)|
  z\in\mathbb{C}\right\},$
  in which $\left(
  \begin{array}{c}
 a^{(i)} \\
 b^{(i)} \\
  \end{array}
  \right)$
  is  a  fixed  nonzero  solution  of
  (\ref{eq:2.6}) with $|a^{(i)}|^2+
 |b^{(i)}|^2=1$.
  Up  to  now,  we  have actually  provided  a  method  to  find  all the  roots  of  (2.1)  in
  $\mathbb{H}$.  To  summarize  our  result  as  a  theorem, we need to introduce some notions.

\vskip 10pt

  Let
  $p_i=t_1^{(i)}+t_2^{(i)}\mathbf{j}\in \mathbb{H}$ for $i=1,\ldots,n$ where
  $t_1^{(i)},t_2^{(i)}\in\mathbb{C}$.
  We call  the  following  four  polynomials  {\it the  derived  polynomials}  of  Equation (2.2):
  $$\begin{array}{cc}
 f_1(t)=t_1^{(n)}t^n+\cdots+t_1^{(1)}t+d_0, & f_2(t)=t_2^{(n)}t^n+\cdots+t_2^{(1)}t ;\\
 \bar f_1(t)=\overline{t_1^{(n)}}t^n+\cdots+\overline{t_1^{(1)}}t+d_0, &
 \bar f_2(t)=\overline{t_2^{(n)}}t^n+\cdots+\overline{t_2^{(1)}}t.
  \end{array}$$
  We  define  the  {\it  discriminant polynomial} of  Equation  (2.2)  as
  $\tilde p(t)=f_1(t)\bar f_1(t)+f_2(t)\bar  f_2(t)$.
  We  factor  it  as  in  (2.5). Remark that these two $\tilde p
 (t)$ are essentially equal.
  Introduce  sets $T_1$ and $T_2$ as follows\\

 $T_1=\{\eta\in\{\eta_1,\ldots, \eta_k\}\,|\,
f_1(\eta)=f_2(\eta)=\bar f_1(\eta)=\bar f_2(\eta)=0\},$\\

$T_2=\{\eta_1,\ldots, \eta_k\}\backslash T_1.$\\

  Now we can state our
   main result

  \begin{theorem}\label{theorem1}
   With the above notations, the solution
  set of (2.2)
  over $\mathbb{H}$ is
  \begin{equation}\label{formula1}
  \{\xi_1,\ldots, \xi_s\}\dot{\cup}_{\eta_i\in
  T_2}\left\{\omega_i\right\}\dot{\cup}_{\eta_i\in
  T_1}[\eta_i],\end{equation}
  where $ \omega_i$ takes
  \begin{equation}\label{zero1}
  \frac{1}{|f_1(\eta_i)|^2+|f_2(\eta_i)|^2}\{|f_2(\eta_i)|^2\eta_i+|f_1(\eta_i)|^2\overline{\eta_i}
  -2f_2(\eta_i)\overline{f_1(\eta_i)}({\rm{Im}}\eta_i)\mathbf{k}\}
\end{equation}
  as its value if $|f_1(\eta_i)|^2+|f_2(\eta_i)|^2\neq 0$,  otherwise takes
\begin{equation}\label{zero2}
       \frac{1}{|f_1(\overline{\eta_i})|^2+|f_2(\overline{\eta_i})|^2} \{|f_1(\overline{\eta_i})|^2\eta_i
       +|f_2(\overline{\eta_i})|^2
  \overline{\eta_i}+2f_2(\overline{\eta_i})\overline{f_1(\overline{\eta_i})}({\rm{Im}}\eta_i)\mathbf{k}\}
  \end{equation}
  as its value,  in which $\Im \eta_i$ means the imaginary  part (real number) of $\eta_i$. Moreover, the
  union of the  first two parts in the above set  is the set of isolated solutions
  and the third part of the above  set is the set of spherical solutions.
 \end{theorem}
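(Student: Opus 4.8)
The plan is to assemble the fibrewise analysis carried out above into the single description \eqref{formula1}, to supply the one genuinely new ingredient---the closed-form passage from a matrix solution back to a quaternion---and then to do the bookkeeping that separates isolated from spherical zeros. The backbone is the correspondence given by $\sigma$: a quaternion $x$ solves the reduced equation (2.2) if and only if $Y=x^{\sigma}$ solves \eqref{eq:matrix}, and for every such $Y$ the relation $\tilde p(Y)=0$ forces $J_Y$ to be one of the $s+k$ matrices listed in \eqref{J_Y}. Hence each zero lies over exactly one $\xi_i$ or one conjugate pair $\{\eta_i,\bar\eta_i\}$, and it suffices to describe the fibre over each. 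Case~1 is already complete: over $\xi_i$ the Jordan form is scalar, so $Y=\xi_iI$, and reality of $\xi_i$ turns $\tilde p(\xi_i)=|f_1(\xi_i)|^2+|f_2(\xi_i)|^2=0$ into $f_1(\xi_i)=f_2(\xi_i)=0$; thus $\xi_i$ is a real, hence isolated, zero.

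For a nonreal root $\eta_i$ I would solve the rank-deficient system \eqref{eq:2.6}, whose coefficient determinant equals $-\tilde p(\bar\eta_i)=0$, and then split on $T_1$ versus $T_2$. Using $\bar f_m(\bar\eta_i)=\overline{f_m(\eta_i)}$ and $\bar f_m(\eta_i)=\overline{f_m(\bar\eta_i)}$, all four coefficients of \eqref{eq:2.6} vanish precisely when $\eta_i\in T_1$; in that case $\Gamma_i=\mathbb{C}^{2\times1}$, every $Y$ of the form \eqref{eq:2.5} solves \eqref{eq:matrix}, and since $\sigma$ is a homomorphism with $\operatorname{diag}(\eta_i,\bar\eta_i)=\eta_i^{\sigma}$ these are exactly the matrices $(q\eta_i q^{-1})^{\sigma}$ with $q\in\mathbb{H}\setminus\{0\}$, so the whole class $[\eta_i]$ is a spherical zero. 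When $\eta_i\in T_2$ the space $\Gamma_i$ is one-dimensional, and as the residual phase scalar cancels in the conjugation $TDT^{-1}$ there is a unique admissible $T$ and hence a single quaternion $\omega_i$---the isolated case.

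The substantive computation, which I expect to be the main obstacle, is to identify $\omega_i=\sigma^{-1}(TDT^{-1})$ with \eqref{zero1}--\eqref{zero2}. Writing $T=\left(\begin{smallmatrix} z_1 & z_2 \\ -\bar z_2 & \bar z_1\end{smallmatrix}\right)$ with $|z_1|^2+|z_2|^2=1$ and $D=\operatorname{diag}(\eta_i,\bar\eta_i)$, a direct product gives $TDT^{-1}=\left(\begin{smallmatrix} w_1 & w_2 \\ -\bar w_2 & \bar w_1\end{smallmatrix}\right)$ with $w_1=|z_1|^2\eta_i+|z_2|^2\bar\eta_i$ and $w_2=z_1z_2(\bar\eta_i-\eta_i)=-2\mathbf{i}(\Im\eta_i)z_1z_2$, so $\omega_i=w_1+w_2\mathbf{j}$. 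Reading a unit generator of $\Gamma_i$ off \eqref{eq:2.6}, its second row yields $(\bar z_1,z_2)\propto(\overline{f_2(\eta_i)},\overline{f_1(\eta_i)})$, legitimate exactly when $|f_1(\eta_i)|^2+|f_2(\eta_i)|^2\neq0$; substituting $z_1=f_2(\eta_i)/M$, $z_2=\overline{f_1(\eta_i)}/M$ with $M^2=|f_1(\eta_i)|^2+|f_2(\eta_i)|^2$ into $w_1,w_2$ produces \eqref{zero1}, while its first row yields $(\bar z_1,z_2)\propto(f_1(\bar\eta_i),-f_2(\bar\eta_i))$ in the complementary case and produces \eqref{zero2}. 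The one delicate point is converting $w_2\mathbf{j}$ into a pure $\mathbf{k}$-term: since each $u\in\mathbb{C}=\mathbb{R}+\mathbb{R}\mathbf{i}$ commutes with $\mathbf{i}$ and $\mathbf{i}\mathbf{j}=\mathbf{k}$, one has $\mathbf{i}\,u\,\mathbf{j}=u\,\mathbf{k}$, which collapses the apparent $\mathbf{j}$-component and leaves exactly the coefficient $-2f_2(\eta_i)\overline{f_1(\eta_i)}(\Im\eta_i)$ of $\mathbf{k}$ in \eqref{zero1} (and its analogue in \eqref{zero2}). Since $\eta_i\notin T_1$ forces at least one of the two generating vectors to be nonzero, exactly one branch always applies.

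Finally I would check that \eqref{formula1} is a disjoint union exhausting all zeros. Exhaustiveness is immediate from the fibrewise description. For disjointness, the $\xi_j$ are real, while each $\omega_i$ and each point of $[\eta_i]$ belongs to the class of the nonreal $\eta_i$ and is therefore nonreal; moreover distinct pairs $\{\eta_i,\bar\eta_i\}$ determine distinct classes, since a quaternion's conjugacy class is fixed by its real part and modulus. As $\omega_i\in[\eta_i]$, the point-fibres and the sphere-fibres never meet. The isolated-versus-spherical split is precisely the $T_2$-versus-$T_1$ dichotomy already obtained: a one-dimensional $\Gamma_i$ pins down the single point $\omega_i$ of the sphere $[\eta_i]$, whereas $\Gamma_i=\mathbb{C}^{2\times1}$ makes every element of $[\eta_i]$ a zero, matching Definition~1.1.
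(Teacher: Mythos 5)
Your proposal is correct and follows essentially the same route as the paper: the $\sigma$-embedding, the reduction to the Jordan forms \eqref{J_Y}, the split into $T_1$ (where $\Gamma_i=\mathbb{C}^{2\times 1}$ gives the whole class $[\eta_i]$) versus $T_2$ (where the one-dimensional kernel of \eqref{eq:2.6} pins down a single $\omega_i$), and the same explicit generators $(\overline{f_2(\eta_i)},\overline{f_1(\eta_i)})$ and $(f_1(\overline{\eta_i}),-f_2(\overline{\eta_i}))$ yielding \eqref{zero1} and \eqref{zero2}. Your computation of $\omega_i$ via $TDT^{-1}$ is just the matrix form of the paper's quaternionic conjugation $(\overline{a^{(i)}}+b^{(i)}\mathbf{j})\eta_i(\overline{a^{(i)}}+b^{(i)}\mathbf{j})^{-1}$, and all the checks (scale-invariance, non-vanishing of at least one generator for $\eta_i\in T_2$, disjointness) match the paper's argument.
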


 \begin{proof}
 To finish the proof we need to continue the argument on the case when (2.9) is
 nontrivial, i.e., $ \eta_i\in T_2$. For any $ \eta_i\in T_2$,  let
  $\left(
    \begin{array}{c}
      a^{(i)} \\
      b^{(i)}\\
    \end{array}
  \right)$ be a nonzero complex solution of the  system $\left(
    \begin{array}{cc}
  f_2(\overline{\eta_i}) &  f_1(\overline{\eta_i}) \\
   \bar f_1(\overline{\eta_i}) &  -\bar f_2(\overline{\eta_i}) \\
    \end{array}
  \right)X=0$  with $|a^{(i)}|^2+
    |b^{(i)}|^2=1$.
 Since the set of nonzero solutions of (\ref{eq:2.6}) in unknown
 $X=\left(
   \begin{array}{c}
     \overline{z_1} \\
     z_2 \\
   \end{array}
 \right)$
 is
 $\left\{l\left(
   \begin{array}{c}
     a^{(i)} \\
     b^{(i)}\\
   \end{array}
     \right)\mid l\neq
     0,
     l\in\mathbb{C}\right\}$,
     the     solutions of     (\ref{eq:matrix})     corresponding     to     the     Jordan
     canonical     form $\left(
   \begin{array}{cc}
     \eta_i & 0 \\
     0 & \overline{\eta_i }\\
   \end{array}
     \right)$
     are
      $$  \left(
      \begin{array}{cc}
        \overline{la^{(i)}} & lb^{(i)} \\
        -\overline{lb^{(i)}} & la^{(i)} \\
      \end{array}
    \right)\left(
 \begin{array}{cc}
   \eta_i & 0 \\
  0  & \overline{\eta_i}\\
 \end{array}
       \right)\left(
      \begin{array}{cc}
        \overline{la^{(i)}} & lb^{(i)} \\
        -\overline{lb^{(i)}} & la^{(i)} \\
      \end{array}
    \right)^{-1}$$
    $$ = \left(
      \begin{array}{cc}
        \overline{a^{(i)}} & b^{(i)} \\
        -\overline{b^{(i)}} & a^{(i)} \\
      \end{array}
    \right)\left(
 \begin{array}{cc}
   \eta_i & 0 \\
  0  & \overline{\eta_i}\\
 \end{array}
       \right)\left(
      \begin{array}{cc}
        \overline{a^{(i)}} & b^{(i)} \\
        -\overline{b^{(i)}} & a^{(i)} \\
      \end{array}
    \right)^{-1},$$
    which    is    actually    one    value,    and    in    $\mathbb{H}$    which    can    be    written
 as  $$(\overline{a^{(i)}}+b^{(i)}\mathbf{j})\eta_i(\overline{a^{(i)}}+b^{(i)}\mathbf{j})^{-1}
 =(\overline{a^{(i)}}+b^{(i)}\mathbf{j})\eta_i({a^{(i)}}-b^{(i)}\mathbf{j})$$
  $$={|a^{(i)}|^2\eta_i+|b^{(i)}|^2\overline{\eta_i}
  +\overline{a^{(i)}}b^{(i)}(\overline{\eta_i}-\eta_i)\mathbf{j}},$$
  which can not be a real number.
  Therefore, the solution set of
  $p_nx^n+\cdots+p_1x+d_0=0$ in the
  skew-field $\mathbb{H}$ is
  \begin{equation}\label{formula2b}
   \{\xi_1,\ldots, \xi_s\}\dot{\cup}_{\eta_i\in
  T_2}\left\{{|a^{(i)}|^2\eta_i+|b^{(i)}|^2\overline{\eta_i}-2\overline{a^{(i)}}b^{(i)}(\Im \eta_i)
  \mathbf{k}}\right\}\dot{\cup}_{\eta_i\in
  T_1}[\eta_i].
  \end{equation}
  Note that $|f_1(\eta_i)|^2+|f_2(\eta_i)|^2$ and $|f_1(\overline{\eta_i})|^2+|f_2(\overline{\eta_i})|^2$
  can not be $0$ simultaneously for
  $\eta_i\in T_2$, then we can take   $$\left(
    \begin{array}{c}
      a^{(i)} \\
      b^{(i)}\\
    \end{array}
  \right)=\left(
    \begin{array}{c}
      \frac{\overline{f}_2(\overline{\eta_i})}{\sqrt{|\overline{f}_1
      (\overline{\eta_i})|^2+|\overline{f}_2(\overline{\eta_i})|^2}} \\
      \frac{\overline{f}_1(\overline{\eta_i})}{\sqrt{|\overline{f}_1
      (\overline{\eta_i})|^2+|\overline{f}_2(\overline{\eta_i})|^2}}\\
    \end{array}
  \right)$$ for the case $|f_1(\eta_i)|^2+|f_2(\eta_i)|^2\neq
  0$, otherwise we take
   $$\left(
    \begin{array}{c}
      a^{(i)} \\
      b^{(i)}\\
    \end{array}
  \right)=\left(
    \begin{array}{c}
      \frac{{f}_1(\overline{\eta_i})}{\sqrt{|{f}_1(\overline{\eta_i})|^2+|{f}_2(\overline{\eta_i})|^2}} \\
      \frac{-{f}_2(\overline{\eta_i})}{\sqrt{|{f}_1(\overline{\eta_i})|^2+|{f}_2(\overline{\eta_i})|^2}}\\
    \end{array}
  \right).$$
  After  manipulations,  the set in (\ref{formula2b}) becomes
   $$
  \{\xi_1,\ldots, \xi_s\}\dot{\cup}_{\eta_i\in
  T_2}\left\{\omega_i\right\}\dot{\cup}_{\eta_i\in
  T_1}[\eta_i],$$
  where $ \omega_i$ takes
  $$\frac{1}{|f_1(\eta_i)|^2+|f_2(\eta_i)|^2}\{|f_2(\eta_i)|^2\eta_i+|f_1(\eta_i)|^2\overline{\eta_i}
  -2f_2(\eta_i)\overline{f_1(\eta_i)}({\rm{Im}}\eta_i)\mathbf{k}\}$$ as its value if $|f_1(\eta_i)|^2+|f_2(\eta_i)|^2\neq0$;
  otherwise it takes
      $$ \frac{1}{|f_1(\overline{\eta_i})|^2+|f_2(\overline{\eta_i})|^2}
      \{|f_1(\overline{\eta_i})|^2\eta_i+|f_2(\overline{\eta_i})|^2
  \overline{\eta_i}+2f_2(\overline{\eta_i})\overline{f_1(\overline{\eta_i})}({\rm{Im}}\eta_i)\mathbf{k}\}$$
  as its value.

 Finally, it is clear that  $[\eta_i]$ contains no real
  numbers for $\eta_i\in T_1$.
 This
  completes the proof.
\end{proof}
Note that there is no repetition in the solution set given in
(\ref{formula1}) and one can use (\ref{zero1}) or (\ref{zero2}) for
$\omega_i$ if
$$(|f_1(\eta_i)|^2+|f_2(\eta_i)|^2)(|f_1(\eta_i)|^2+|f_2(\eta_i)|^2)\ne0.$$

Theorem \ref{theorem1} shows, once we get a decomposition with the
form (\ref{p(t)}) of the discriminant polynomial $\tilde p(t)$, then
we can produce all roots of the  quaternionic equation
(\ref{eq:simple 1}) by formula (\ref{formula1}).

 From Theorem 1 and the arguments before Theorem 1, we can easily see the following well-known results.
 \begin{corollary}\label{corollary2}
  \begin{enumerate}
\item[(a).] Any simple quaternionic equation  $q_nx^n+\cdots+q_1x+q_0=0(q_n\neq
 0)$ has a root in $\mathbb{H}$.
\item[(b).] The simple quaternionic equation  $q_nx^n+\cdots+q_1x+q_0=0(q_n\neq
 0)$ has a finite number of roots  in
 $\mathbb{H}$ iff it has at most
   $n$ distinct roots in $\mathbb{H}$.
\item[(c).] The roots of  $q_nx^n+\cdots+q_1x+q_0=0(q_n\neq
 0)$ are distributed in at most
 $n$ conjugate classes, and there are at most $n$ real roots among them.
\end{enumerate}\end{corollary}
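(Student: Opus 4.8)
The plan is to read off all three assertions directly from the explicit solution set \eqref{formula1} furnished by Theorem~\ref{theorem1}, together with the bound $s+k\le n$ recorded after \eqref{p(t)}. I work throughout with the normalized equation \eqref{eq:simple 1}, which by the reduction preceding it has the same roots in $\mathbb{H}$ as the original equation \eqref{eq:simple}. The only background fact I invoke is the standard description of conjugacy classes in $\mathbb{H}$: two quaternions lie in the same class iff they have equal real part and equal modulus. In particular $[\xi]=\{\xi\}$ is a single point when $\xi\in\mathbb{R}$, the class $[\eta]$ is an infinite $2$-sphere when $\eta$ is non-real, and $[u\eta u^{-1}]=[\eta]$ for every $u\ne 0$ since conjugation preserves both real part and modulus.

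For (a) I first note that the leading coefficient of $\tilde p(t)=\det(p_nt^n+\cdots+p_1t+d_0)^\sigma$ is $\det p_n^\sigma=|t_1^{(n)}|^2+|t_2^{(n)}|^2=|p_n|^2\ne 0$, so $\tilde p$ is a real polynomial of exact degree $2n\ge 2$ and hence has at least one complex root. Thus $s+k\ge 1$ in \eqref{p(t)}: if a real $\xi_i$ occurs it is a root of \eqref{eq:simple 1} by Case~1, and if a non-real pair $\eta_i,\bar\eta_i$ occurs then either the whole sphere $[\eta_i]$ (when $\eta_i\in T_1$) or the point $\omega_i$ (when $\eta_i\in T_2$) appears in \eqref{formula1}. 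In every case the solution set is nonempty.

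For (b) I split \eqref{formula1} into its finite part $\{\xi_1,\dots,\xi_s\}\,\dot{\cup}_{\eta_i\in T_2}\{\omega_i\}$, of cardinality $s+|T_2|$, and its spherical part $\dot{\cup}_{\eta_i\in T_1}[\eta_i]$. Since each $[\eta_i]$ with $\eta_i$ non-real is infinite, the entire root set is finite iff $T_1=\varnothing$, and then $T_2=\{\eta_1,\dots,\eta_k\}$ so the number of roots is exactly $s+k\le n$. Hence finiteness forces at most $n$ distinct roots, while the reverse implication is immediate because $n$ is finite; this is the equivalence asserted in (b).

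For (c) I observe that the classes represented in \eqref{formula1} are the singletons $[\xi_i]$ $(i=1,\dots,s)$ together with the classes $[\eta_i]$ $(i=1,\dots,k)$: each isolated non-real root $\omega_i$ equals a conjugate $u\eta_i u^{-1}$ and so lies in $[\eta_i]$, while the spherical roots fill $[\eta_i]$ exactly. Because $\xi_1,\dots,\xi_s$ are distinct reals and $\eta_1,\bar\eta_1,\dots,\eta_k,\bar\eta_k$ are distinct non-reals, these $s+k$ classes are pairwise distinct, so the roots occupy $s+k\le n$ classes; and the real roots are precisely $\xi_1,\dots,\xi_s$, since every $\omega_i$ is non-real and each $[\eta_i]$ contains no reals (both shown in the proof of Theorem~\ref{theorem1}), giving at most $s\le n$ real roots. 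Every step reduces to Theorem~\ref{theorem1} and the inequality $s+k\le n$, so the calculations are short; the only point needing care is the class-counting in (c), namely verifying that $[\xi_1],\dots,[\xi_s],[\eta_1],\dots,[\eta_k]$ are genuinely distinct and that each $\omega_i$ belongs to $[\eta_i]$ rather than to some new class. Both rest on the real-part-and-modulus criterion for conjugacy (no two $\eta_i$ are mutually conjugate because the factors in \eqref{p(t)} are distinct), so I anticipate no real obstacle.
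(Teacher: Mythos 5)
Your proof is correct and follows essentially the same route as the paper's: all three parts are read off from the explicit solution set \eqref{formula1} of Theorem~\ref{theorem1}, the bound $s+k\le n$, and the real-part-and-modulus criterion for conjugacy in $\mathbb{H}$. You merely supply more detail where the paper is terse (e.g.\ justifying that $\tilde p$ has exact degree $2n$ for part (a), and the class-counting for part (c)), which is a faithful elaboration rather than a different argument.
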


  \begin{proof} (a) is obvious.

  (b) From Theorem 1 we need to only show
  that $[\eta_i]$ is an infinite set if $\eta_i\in T_1$.
  It is well-known from \cite{ZFZ} that $u_1,u_2\in \mathbb{H} $ are
  conjugate (i.e., there exists nonzero $ q\in\mathbb{H}$
  such that $
  u_1=qu_2q^{-1}$) iff $\Re u_1=\Re u_2$ and
  $|u_1|=|u_2|$. Since $\eta_i\in T_1$ has a
 nonzero imaginary part, $[\eta_i]$ is an infinite set. Thus $q_nx^n+\cdots+q_1x+q_0=0$ has a finite
  number of roots  in $\mathbb{H}$ iff $T_1=\varnothing$, iff $p_nx^n+\cdots+p_1x+d_0=0$ has
  at most $n$ roots in $\mathbb{H}$ since $s+k\le n$ (See (\ref{p(t)}) for the notations).

  (c) follows from $s+k\le n$.
  \end{proof}

Now we give a quick method to solve simple quaternionic polynomials
with all real coefficients or with all complex coefficients. These
results do not seem to be deduced easily from the results in
\cite{JO1}.

\begin{corollary}\label{corollary3}
 \begin{enumerate}
\item[(a).] If all $q_i$ in  $q_nx^n+\cdots+q_1x+q_0=0 (q_n\neq
 0)$ are real numbers and
  the solution set of this
 equation in
 $\mathbb{C}$ is $\{\xi_1,\ldots, \xi_s,  \zeta_1,
 \overline{\zeta_1},$ $\ldots, \zeta_t, \overline{\zeta_t}
 \}$, where $\xi_1,\ldots,\xi_s$ are  distinct real
numbers, $ \zeta_1, \ldots,\zeta_t$ are
 distinct  nonreal complex numbers,
  then the solution set of this equation in
 $\mathbb{H}$ is $$\{\xi_1,\ldots, \xi_s,\}\cup[\zeta_1]\cup\cdots\cup[\zeta_t].$$
\item[(b).] More generally, if all $q_i$ in  $q_nx^n+\cdots+q_1x+q_0=0(q_n\neq
 0)$ are complex numbers, and
  the solution set of this
 equation in
 $\mathbb{C}$ is $\{\xi_1,\ldots, \xi_s, \eta_1,\ldots, \eta_k, \zeta_1,
 \overline{\zeta_1},\ldots, \zeta_t, \overline{\zeta_t}
 \}$, where $\xi_1,\ldots,\xi_s$ are  distinct real
numbers, $\eta_1,\ldots, \eta_k, \zeta_1, \ldots,\zeta_t$ are
 distinct  nonreal complex numbers (each
$\overline{\eta_i}$ is no longer the root of this equation),
  then the solution set of this equation in
 $\mathbb{H}$ is $$\{\xi_1,\ldots, \xi_s, \eta_1,\ldots,
 \eta_k,\}\cup[\zeta_1]\cup\cdots\cup[\zeta_t].$$
\item[(c).] Let $f_1(t)$, $f_2(t)$, $\bar f_1(t)$ and $\bar f_2(t)$ be the derived polynomials for
(\ref{eq:simple 1}). Then (\ref{eq:simple 1}) has  finitely many
solutions iff the complex polynomial $\gcd (f_1(t), f_2(t),\bar
f_1(t),\bar f_2(t))$ has no nonreal  complex root  iff the complex
polynomial $\gcd (f_1(t), f_2(t))$ has no nonreal conjugate complex
roots.
\end{enumerate}\end{corollary}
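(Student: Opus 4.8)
My plan is to read off all three parts from Theorem \ref{theorem1}, the key simplification being that having all coefficients in $\mathbb{C}$ (in particular in $\mathbb{R}$) annihilates the ``$\mathbf{j}$-part'' of every coefficient. Concretely, after normalizing to (\ref{eq:simple 1}), the hypothesis $p_i\in\mathbb{C}$ means $t_2^{(i)}=0$ for all $i$, so the derived polynomials collapse: $f_2\equiv\bar f_2\equiv 0$, while $f_1(t)=p(t)$ is precisely the complex polynomial (\ref{eq:simple 1}) and $\bar f_1(t)=\overline{f_1(\bar t)}$ is its coefficient-conjugate $\bar p$. Hence the discriminant polynomial factors as $\tilde p(t)=f_1(t)\bar f_1(t)=p(t)\bar p(t)$. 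Since the roots of $\bar p$ are the conjugates of the roots of $p$, I would first record that the nonreal roots of $\tilde p$ occur in the conjugate pairs $\{\eta_i,\bar\eta_i\}$ (for $i=1,\ldots,k$, where $\eta_i$ is a root of $p$ whose conjugate is not) and $\{\zeta_j,\bar\zeta_j\}$ (for $j=1,\ldots,t$), so that the representatives in the factorization (\ref{p(t)}) may be taken to be $\eta_1,\ldots,\eta_k,\zeta_1,\ldots,\zeta_t$.

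The second step is the classification into $T_1$ and $T_2$ and the evaluation of $\omega_i$. Because $f_2\equiv\bar f_2\equiv 0$, membership $\eta\in T_1$ reduces to $f_1(\eta)=\bar f_1(\eta)=0$, i.e.\ to both $\eta$ and $\bar\eta$ being roots of $p$. Thus each $\zeta_j\in T_1$ while each $\eta_i\in T_2$, precisely because $\bar\eta_i$ is assumed not to be a root of $p$. For $\eta_i\in T_2$ I then compute $\omega_i$: since $f_1(\eta_i)=p(\eta_i)=0$ and $f_2(\eta_i)=0$, the denominator of (\ref{zero1}) vanishes, so one is forced onto (\ref{zero2}); there the denominator equals $|f_1(\bar\eta_i)|^2=|p(\bar\eta_i)|^2$, which is nonzero exactly because $\bar\eta_i$ is not a root, and with $f_2\equiv 0$ the bracket reduces to $|f_1(\bar\eta_i)|^2\eta_i$, giving $\omega_i=\eta_i$. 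Substituting into (\ref{formula1}) produces the solution set $\{\xi_1,\ldots,\xi_s,\eta_1,\ldots,\eta_k\}\cup[\zeta_1]\cup\cdots\cup[\zeta_t]$, which is part (b); part (a) is the special case $k=0$, since real coefficients give $\bar p=p$ and force every nonreal root to appear with its conjugate.

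For part (c) I would start from Corollary \ref{corollary2}(b), which already gives that the equation has finitely many solutions iff $T_1=\varnothing$, so the remaining task is to translate ``$T_1=\varnothing$'' into the two gcd conditions. Using $\bar f_j(t)=\overline{f_j(\bar t)}$ one verifies that the common zero set of $f_1,f_2,\bar f_1,\bar f_2$ is conjugation-invariant and that $\bar\eta_i$ lies in it iff $\eta_i\in T_1$; since every nonreal common zero is a nonreal zero of $\tilde p$, hence one of the $\eta_i,\bar\eta_i$, this yields $T_1\neq\varnothing$ iff $g:=\gcd(f_1,f_2,\bar f_1,\bar f_2)$ has a nonreal root, which is the first stated equivalence. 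For the second, I would use that coefficient-conjugation is a ring automorphism of $\mathbb{C}[t]$, so $\gcd(\bar f_1,\bar f_2)=\bar h$ with $h:=\gcd(f_1,f_2)$ and therefore $g=\gcd(h,\bar h)$; as $\alpha$ is a root of $\bar h$ iff $\bar\alpha$ is a root of $h$, the roots of $g$ are exactly those $\alpha$ for which both $\alpha$ and $\bar\alpha$ are roots of $h$, so $g$ has a nonreal root iff $h=\gcd(f_1,f_2)$ has a nonreal conjugate pair of roots.

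The only genuinely delicate point I anticipate is the evaluation of $\omega_i$ in part (b): the natural formula (\ref{zero1}) is a $0/0$ indeterminacy exactly at the roots of interest, so one must pass to (\ref{zero2}) and check that its denominator survives, which is precisely where the hypothesis ``$\bar\eta_i$ is not a root'' enters. Alternatively one can bypass the normalized formulas and solve the linear system (\ref{eq:2.6}) directly, where $f_2\equiv 0$ makes the conjugating quaternion transparent. In part (c) the only care needed is the bookkeeping between a representative $\eta_i$ and its partner $\bar\eta_i$, which the conjugation symmetry above resolves.
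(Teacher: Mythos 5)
Your proposal is correct and follows essentially the same route as the paper: both read parts (a) and (b) off Theorem \ref{theorem1} by observing that complex coefficients force $f_2\equiv\bar f_2\equiv 0$, so that $\tilde p=f_1\bar f_1$, the $\zeta_j$ land in $T_1$, the $\eta_i$ land in $T_2$ (because $\bar f_1(\eta_i)=\overline{f_1(\bar\eta_i)}\neq 0$), and $\omega_i=\eta_i$; and both prove (c) by reducing to ``$T_1=\varnothing$'' and translating that into the gcd conditions. Your $\gcd(h,\bar h)$ argument for the second equivalence in (c) is a welcome elaboration of a step the paper dismisses with ``follows easily,'' but it is not a different method.
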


\begin{proof} (a)  This is a special case of Part (b).

  (b) When  the equation considered has only complex
  coefficients, two of the derived polynomials are $f_1(t)=q_nt^n+\cdots+q_1t+q_0$ (up to a
  complex
  scalar), and $f_2(t)=0$. So, the roots of the
  discriminant polynomial are $\xi_1,\ldots, \xi_s, \eta_1,
  \overline{\eta_1},\ldots, \eta_k,\overline{\eta_k},
  \zeta_1, \overline{\zeta_1},\ldots, \zeta_t,
  \overline{\zeta_t}$. For each $\eta_i \, (i=1,\ldots,k)$,
  since $\overline{f}_1(\eta_i)\neq 0$,  $\eta_i$ is in
  $T_2$.
  We can take $\left(
 \begin{array}{c}
 a^{(i)} \\
 b^{(i)} \\
 \end{array}
  \right)=\left(
 \begin{array}{c}
 1 \\
 0 \\
 \end{array}
  \right)$ as the unit complex solution of the  system $\left(
  \begin{array}{cc}
  f_2(\overline{\eta_i}) &  f_1(\overline{\eta_i}) \\
   \bar f_1(\overline{\eta_i}) &  -\bar f_2(\overline{\eta_i}) \\
  \end{array}
   \right)X=0$. Then, $|a^{(i)}|^2\eta_i+|b^{(i)}|^2\overline{\eta_i}-2b^{(i)}\overline{a^{(i)}}(\Im
   \eta_i)\mathbf{k}=\eta_i$. It is easy to see  that $\zeta_i\in T_1$. This completes the proof.

   (c) Suppose $c$ is a nonreal complex root of gcd$(f_1(t), f_2(t),\bar f_1(t),\bar
   f_2(t))$. Then both $c$ and $\bar c$ are roots of the
   discriminant polynomial,
   $f_1(c)=f_2(c)=\bar f_1(c)=\bar f_2(c)=0$ and $f_1(\bar c)=f_2(\bar c)=\bar f_1(\bar c)=\bar f_2(\bar
   c)=0$, which implies that at least one of $c$, $\bar c$ is in $T_1$. Thus, $T_1=\varnothing$ iff
   gcd$(f_1(t), f_2(t),\bar f_1(t),\bar
   f_2(t))$ has no nonreal complex root. From Theorem 1 we see  that (\ref{eq:simple 1}) has finitely many solutions iff
   $T_1=\varnothing$. The conclusions in the corollary
   follow easily.
 \end{proof}

Now we can give a simplified version of  Theorem 1.

\begin{theorem} Consider the simple quaternionic equation
$p(x):=p_nx^n+\cdots+p_1x+d_0=0,$ where $p_i\in\mathbb{H}$ with $
p_n\neq 0$ and $d_0=0 \ {\text{or}} \ 1$. We write
$p(t)=g(t)(g_1(t)+g_2(t)\mathbf{j})$ where  $t$ is considered as a
real variable, $g,g_1,g_2\in\C[t]$ with $\gcd(g_1,g_2)=1$. Let the
complex solution sets for $g(t)$ and $\tilde g(t)=g_1(t)\bar
g_1(t)+g_2(t)\bar g_2(t)$ are
\begin{equation}\label{first}\{\xi_1,\ldots,\xi_{s};
\lambda_1,\bar \lambda_1,\lambda_2, \bar\lambda_2,\cdots,
\lambda_{t},\bar\lambda_{t}; \eta_1,\ldots, \eta_{k_1}
\},\end{equation}
\begin{equation}\label{sec}\{\eta_{k_1+1},\bar\eta_{k_1+1}\ldots,
\eta_{k},\bar\eta_k \}\end{equation} respectively. We may assume
that
 $\xi_1,\ldots,\xi_s$ are
distinct real numbers; $\eta_1,\ldots,$ $
\eta_{k_1},\eta_{k_1+1},\bar\eta_{k_1+1}\ldots, \eta_{k},\bar\eta_k
, \lambda_1,\bar \lambda_1,\lambda_2, \bar\lambda_2,\cdots,
\lambda_{t},\bar\lambda_{t}$ are distinct nonreal complex numbers
after deleting those   $\eta_i$'s and $\bar\eta_i$'s in (\ref{sec})
if one of them appears in the set (\ref{first}). Then the
quaternionic solution set of $p(x)$ is
  \begin{equation}\label{formula2}
  \{\xi_1,\ldots, \xi_s,; \omega_1,\cdots,  \omega_{k}\}\dot{\cup}_{ i=1
   }^{t}[\lambda_i] ,\end{equation}
where $$\omega_i= \frac{1}{|g_1(\bar\eta_i)|^2+|
g_2(\bar\eta_i)|^2}\{|
g_2(\bar\eta_i)|^2\bar\eta_i+|g_1(\bar\eta_i)|^2{\eta_i}
  +2 g_2(\bar\eta_i)\overline{g_1(\bar\eta_i)}({\rm{Im}}\eta_i)\mathbf{k}\}.
 $$
 \end{theorem}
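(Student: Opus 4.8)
The plan is to derive this statement directly from Theorem~\ref{theorem1} by rewriting the derived and discriminant polynomials of \eqref{eq:simple 1} in terms of the given factorization $p(t)=g(t)(g_1(t)+g_2(t)\mathbf{j})$. First I would observe that since $p(t)=f_1(t)+f_2(t)\mathbf{j}=g(t)g_1(t)+g(t)g_2(t)\mathbf{j}$, the derived polynomials of \eqref{eq:simple 1} are $f_1=gg_1$ and $f_2=gg_2$, and taking conjugate coefficients gives $\bar f_1=\bar g\,\bar g_1$, $\bar f_2=\bar g\,\bar g_2$. Hence the discriminant polynomial factors as
$$\tilde p(t)=f_1\bar f_1+f_2\bar f_2=g\bar g(g_1\bar g_1+g_2\bar g_2)=g(t)\bar g(t)\tilde g(t),$$
so the complex zeros of $\tilde p$ are the zeros of $g$, of $\bar g$ (the conjugates of the zeros of $g$), and of $\tilde g$. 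Because $\gcd(g_1,g_2)=1$, the polynomials $g_1,g_2$ share no zero, so $\tilde g(\xi)=|g_1(\xi)|^2+|g_2(\xi)|^2>0$ for real $\xi$; thus $\tilde g$ has no real zero and its zeros come in conjugate pairs. Since $\bar g$ has the same real zeros as $g$, the real zeros of $\tilde p$ are exactly the distinct real zeros $\xi_1,\dots,\xi_s$ of $g$, matching the real part of \eqref{formula2}.

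Next I would identify the sets $T_1,T_2$ of Theorem~\ref{theorem1}. Using $f_j=gg_j$ and $\gcd(g_1,g_2)=1$ once more, $f_1(\eta)=f_2(\eta)=0$ holds iff $g(\eta)=0$, and $\bar f_1(\eta)=\bar f_2(\eta)=0$ holds iff $\bar g(\eta)=0$; hence a nonreal zero $\eta$ of $\tilde p$ lies in $T_1$ iff both $\eta$ and $\bar\eta$ are zeros of $g$. These are precisely the conjugate pairs $\lambda_1,\bar\lambda_1,\dots,\lambda_t,\bar\lambda_t$, so $T_1$ accounts for the spherical solutions $[\lambda_1],\dots,[\lambda_t]$. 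All other nonreal conjugate pairs of zeros of $\tilde p$, namely $\eta_1,\bar\eta_1,\dots,\eta_k,\bar\eta_k$ (those with $i\le k_1$ coming from $g$ with $\bar\eta_i$ a zero of $\bar g$, those with $i>k_1$ coming from $\tilde g$ after deleting any overlap with the zeros of $g$), lie in $T_2$ and each contributes one isolated solution $\omega_i$. Feeding this classification into \eqref{formula1} already produces the shape of \eqref{formula2}.

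The only computational point, and the step I expect to be the main obstacle, is turning the two formulas \eqref{zero1}--\eqref{zero2} into the single displayed expression for $\omega_i$. For $i>k_1$ neither $\eta_i$ nor $\bar\eta_i$ is a zero of $g$, so $g(\bar\eta_i)\ne0$; I would apply \eqref{zero1} with $\bar\eta_i$ chosen as the pair representative, substitute $f_j(\bar\eta_i)=g(\bar\eta_i)g_j(\bar\eta_i)$, use $|f_j(\bar\eta_i)|^2=|g(\bar\eta_i)|^2|g_j(\bar\eta_i)|^2$ and $f_2(\bar\eta_i)\overline{f_1(\bar\eta_i)}=|g(\bar\eta_i)|^2g_2(\bar\eta_i)\overline{g_1(\bar\eta_i)}$, and note $\Im\bar\eta_i=-\Im\eta_i$; the common factor $|g(\bar\eta_i)|^2$ then cancels and leaves exactly the stated $\omega_i$. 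For $i\le k_1$ one has $f_1(\eta_i)=f_2(\eta_i)=0$ but $g(\bar\eta_i)\ne0$, so \eqref{zero2} applies, and the same substitution and cancellation of $|g(\bar\eta_i)|^2$ again gives the stated $\omega_i$. The delicate part is precisely this bookkeeping: choosing $\bar\eta_i$ (rather than $\eta_i$) as the representative is what forces the two cases of \eqref{zero1}--\eqref{zero2} to collapse to one formula, while $\gcd(g_1,g_2)=1$ is what guarantees that the cancelled factor $|g(\bar\eta_i)|^2$ is nonzero and that $\tilde g$ contributes no real zeros and no spurious spherical pairs. With both cases reduced to the displayed $\omega_i$, the solution set \eqref{formula2} follows from Theorem~\ref{theorem1}.
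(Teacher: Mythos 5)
Your proposal is correct and follows essentially the same route as the paper: both deduce the statement from Theorem~\ref{theorem1} via the factorizations $f_1=gg_1$, $f_2=gg_2$, $\tilde p=g\bar g\tilde g$, identify $T_1$ with the conjugate pairs $\lambda_i,\bar\lambda_i$ and $T_2$ with the $\eta_i$'s, and obtain the single formula for $\omega_i$ by evaluating at $\bar\eta_i$ and cancelling the nonzero factor $|g(\bar\eta_i)|^2$. Your write-up is in fact more explicit than the paper's (which compresses the final step to ``using \eqref{zero2} and simplifying''), but the underlying argument is the same.
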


 \begin{proof} We continue to use the notations in Theorem 1. We know that $f_1(t)=g(t)g_1(t)$,
 $f_2(t)=g(t)g_2(t)$, $\tilde p(t)=g(t)\bar g(t)\tilde g(t)$,  and $\tilde g(t)$ has no real root.

From Theorem 1 we see that $\{\xi_1,\ldots, \xi_s\}\dot{\cup}_{ i=1
}^{t_1}[\lambda_i]$ are zeros of $p(x)$. All other zeros come from
$\{\eta_1,\ldots, \eta_{k_1},\eta_{k_1+1},\bar\eta_{k_1+1}\ldots,
\eta_{k},\bar\eta_k\}$. For each $\eta_i$, we see that
$|f_1(\bar\eta_i)|^2+|f_2(\bar\eta_i)|^2\neq0$. Then using
(\ref{zero2})
 in Theorem 1 and simplifying we obtain $\omega_i$. This
completes the proof.
\end{proof}

Remark that the above theorem simplifies the computation for finding
all zeros of $p(x)$, and the following known result  (see \cite{PS})
can follow easily from the above theorem.

\begin{corollary}
The  spherical zeros of  simple quaternionic polynomial
$p(x):=p_nx^n+\cdots+p_1x+p_0( p_i\in\mathbb{H}, p_n\neq 0)$ are
distributed in at most
 {\rm{INT}}$(\frac{n}{2})$ conjugate classes, where {\rm{INT}}$(\frac{n}{2})$ means
  the   integral function value at $\frac{n}{2}$.
\end{corollary}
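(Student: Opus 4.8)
The plan is to identify the spherical zeros directly from Theorem 4 and then bound their number by a degree count on the complex factor $g$. First I would reduce to the normalized case: left multiplication of the equation by $q_0^{-1}$ (when $p_0\neq 0$) leaves the solution set unchanged, hence leaves the set of spherical zeros unchanged, so we may assume $p(x)=p_nx^n+\cdots+p_1x+d_0$ with $d_0\in\{0,1\}$ and invoke Theorem 4. Writing $p(t)=g(t)(g_1(t)+g_2(t)\mathbf{j})$ with $g,g_1,g_2\in\C[t]$ and $\gcd(g_1,g_2)=1$, Theorem 4 gives the solution set $\{\xi_1,\ldots,\xi_s;\omega_1,\ldots,\omega_k\}\dot{\cup}_{i=1}^{t}[\lambda_i]$, in which (as in Theorem 1) the conjugate-class part $\dot{\cup}_{i=1}^{t}[\lambda_i]$ is exactly the set of spherical zeros, and $\lambda_1,\bar\lambda_1,\ldots,\lambda_t,\bar\lambda_t$ are precisely the nonreal roots of $g(t)$ occurring in conjugate pairs, as listed in (\ref{first}). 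Thus it suffices to prove $t\le\mathrm{INT}(n/2)$.

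Next I would confirm that these $t$ classes are pairwise distinct, so that $t$ is genuinely the number of spherical conjugate classes. Using the conjugacy criterion recalled in the proof of Corollary \ref{corollary2}(b) (from \cite{ZFZ})---namely $u_1,u_2$ are conjugate iff $\Re u_1=\Re u_2$ and $|u_1|=|u_2|$---one sees that for complex numbers $[\lambda_i]=[\lambda_j]$ would force $\lambda_j\in\{\lambda_i,\bar\lambda_i\}$. Since the $2t$ numbers $\lambda_1,\bar\lambda_1,\ldots,\lambda_t,\bar\lambda_t$ are all distinct, the classes $[\lambda_1],\ldots,[\lambda_t]$ are distinct.

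The core of the argument is then a two-sided degree bound on $g$. On one hand, the same $2t$ distinct numbers $\lambda_1,\bar\lambda_1,\ldots,\lambda_t,\bar\lambda_t$ are roots of $g(t)$, so $\deg g\ge 2t$. On the other hand, from $f_1=gg_1$, $f_2=gg_2$ and $p_n\neq 0$ we have $\max(\deg f_1,\deg f_2)=n$, hence $\deg g+\max(\deg g_1,\deg g_2)=n$; since $\gcd(g_1,g_2)=1$ the polynomials $g_1,g_2$ are not both zero, so $\max(\deg g_1,\deg g_2)\ge 0$ and therefore $\deg g\le n$. Combining, $2t\le\deg g\le n$, which gives $t\le n/2$, i.e.\ $t\le\mathrm{INT}(n/2)$, as required.

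I expect the degree count itself to be entirely routine; the only point that needs care is the bookkeeping in the middle paragraph, that is, verifying that the conjugate-paired roots of $g$ yield exactly $t$ distinct spherical classes and that no spherical class is missed. This is where one must lean on the precise description of the solution set in Theorem 4 and on the quaternionic conjugacy criterion, rather than on the coarser bound $s+k\le n$ used for Corollary \ref{corollary2}.
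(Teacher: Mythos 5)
Your argument is correct and follows exactly the route the paper intends: the paper gives no written proof, stating only that the corollary ``can follow easily'' from Theorem 4, and your degree count $2t\le\deg g\le n$ (together with the check that the conjugate-paired nonreal roots of $g=\gcd(f_1,f_2)$ account for all and only the spherical classes, pairwise distinct) is the natural way to make that precise. Nothing further is needed.
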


\section{Algorithm}
Based on our Theorem 1, we now can give  an algorithm to solve the
quaternionic equation $q_nx^n+\cdots+q_1x+q_0=0 \, (q_n\neq 0)$, as
follows.

\

$\emph{Algorithm 1 }$ (for solving the simple quaternionic
polynomial equation $q_nx^n+\cdots+q_1x+q_0=0$)

{\bf{Step 1.}}\  \ Write the equation as $p_nx^n+\cdots+p_1x+d_0=0$
with $d_0=0$ or $1$ (in fact, if $q_0\neq 0$, simply multiply
 the equation  by $q_0^{-1}$ on the left). Write
$p_i=t_1^{(i)}+t_2^{(i)}\mathbf{j}$ for $i=1,\ldots, n$ with $
t_1^{(i)}, t_2^{(i)}\in\mathbb{C}$. Find the derived polynomials and
discriminant polynomial of $p_nx^n+\cdots+p_1x+d_0=0$:
$$\begin{array}{cc}
 f_1(t)=t_1^{(n)}t^n+\cdots+t_1^{(1)}t+d_0, & f_2(t)=t_2^{(n)}t^n+\cdots+t_2^{(1)}t ;\\
 \bar f_1(t)=\overline{t_1^{(n)}}t^n+\cdots+\overline{t_1^{(1)}}t+d_0, &
 \bar f_2(t)=\overline{t_2^{(n)}}t^n+\cdots+\overline{t_2^{(1)}}t,
  \end{array}$$
$$\tilde p(t)=f_1(t)\bar f_1(t)+f_2(t)\bar f_2(t).\ \ \ \ \ \ \ \ \ \ \ \ \ \ \
\ \ \ \ \ \ \ \ \ \ \ \ \ \ \ \ \ \ \ \ \ \ \ $$ Make sure the
coefficients of $\tilde p(t)$ are real.

  {\bf{Step  2.}}\
  \ Compute all distinct zeros (real or complex) of the discriminant polynomial $\tilde p(t)$
  (in MATLAB, use the command roots). Denote
these zeros by $\xi_1,\ldots,\xi_s$, $\eta_1,\ldots$, $\eta_k$,
$\overline{\eta_1},\ldots$, $\overline{\eta}_k$ such that
   $\xi_1,\ldots,\xi_s$ are distinct real
numbers and $\eta_1,\ldots$, $\eta_k$ are distinct nonreal complex
numbers.
 Then classify $\eta_1,\ldots,\eta_k$
into two sets $T_1$ and $T_2$:

 $T_1=\{ \eta\in\{\eta_1,\ldots,
\eta_k\}\,|\, f_1(\eta)=f_2(\eta)=\bar f_1(\eta)=\bar
f_2(\eta)=0\}$,

$T_2=\{\eta_1,\ldots, \eta_k\}\backslash T_1$.\\
When the simple quaternionic polynomial considered is a polynomial
with real coefficients, then directly set $T_1= \{\eta_1,\ldots,
\eta_k\}$ and  $T_2=\varnothing$.

  {\bf{Step  3.}}\
  \
   For each $\eta_i\in T_2$, compute $f_1(\eta_i)$, $f_2(\eta_i)$. Then by Formula (\ref{formula1}), output all roots of
  $q_nx^n+\cdots+q_1x+q_0=0$.\\

If we use our Theorem 4, then we  get  a better version of
Algorithm
1. \\

$\emph{Algorithm 1' }$ (for solving the simple quaternionic
polynomial equation $q_nx^n+\cdots+q_1x+q_0=0$)

{\bf{Step 1.}}\  \ Write the equation as
$p(x):=p_nx^n+\cdots+p_1x+d_0=0$ with $d_0=0$ or $1$ (in fact, if
$q_0\neq 0$, simply multiply
 the equation  by $q_0^{-1}$ on the left).
Then write $p(t)=g(t)(g_1(t)+g_2(t)\mathbf{j})$ where  $t$ is
considered as a real variable, $g,g_1,g_2\in\C[t]$ with
$\gcd(g_1,g_2)=1$. Now we compute $\tilde g(t)=g_1(t)\bar
g_1(t)+g_2(t)\bar g_2(t)$.

\

{\bf{Step  2.}}
 Compute all distinct zeros (real or complex) for $g(t)$ and  $\tilde g(t)$ respectively (in MATLAB, use the command
 roots):
\begin{equation}\label{first2}\{\xi_1,\ldots,\xi_{s};
\lambda_1,\bar \lambda_1,\lambda_2, \bar\lambda_2,\cdots,
\lambda_{t},\bar\lambda_{t};\eta_1,\ldots, \eta_{k_1}
\},\end{equation}
\begin{equation}\label{sec2}\{\eta_{k_1+1},\bar\eta_{k_1+1}\ldots,
\eta_{k},\bar\eta_k \}, \end{equation} where $\xi_1,\ldots,\xi_s$
are distinct real numbers; $\{ \lambda_1,\bar \lambda_1,\lambda_2,
\bar\lambda_2,\cdots, \lambda_{t},\bar\lambda_{t}\}$
 and $\{\eta_1,\ldots, \eta_{k_1},\eta_{k_1+1},\bar\eta_{k_1+1}\ldots,
\eta_{k},\bar\eta_k\}$ are two sets of distinct nonreal complex
numbers. Delete those   $\eta_i$'s and $\bar\eta_i$'s in
(\ref{sec2}) if one of them appears in the set (\ref{first2}).

\

{\bf{Step 3.}}
   For each $\eta_i$, we compute
   \begin{equation}\label{formulaomega}
   \omega_i= \frac{1}{|g_1(\bar\eta_i)|^2+|
g_2(\bar\eta_i)|^2}\{|
g_2(\bar\eta_i)|^2\bar\eta_i+|g_1(\bar\eta_i)|^2{\eta_i}
  +2 g_2(\bar\eta_i)\overline{g_1(\bar\eta_i)}({\rm{Im}}\eta_i)\mathbf{k}\}.
 \end{equation}
Then the quaternionic solution set of $p(x)$ is
  \begin{equation}\label{formula21}
  \{\xi_1,\ldots, \xi_s,; \omega_1,\cdots,  \omega_{k}\}\dot{\cup}_{ i=1
   }^{t}[\lambda_i] .\end{equation}\\

Prior to our method,  Janovsk$\acute{a}$ and  Opfer  gave an
algorithm in \cite{JO1} for solving the same quaternionis equation
$q_nx^n+\cdots+q_1x+q_0=0 \, (q_n\neq 0)$. In order to introduce
their algorithm precisely, let us first recall the known concept,
companion polynomial.
 For $$p(x)=\sum_{j=0}^nq_jx^j, q_j\in\mathbb{H}, j=0,\ldots, n,\
 q_0,q_n\neq 0,$$
 following Niven \cite{N}, or more recently Janovsk$\acute{a}$ and  Opfer \cite{JO1}, its
 {\it companion polynomial} is defined by
 \begin{equation}\label{companion}
 q_{2n}(x)=\sum_{j,k=0}^n\bar q_jq_kx^{j+k}=\sum_{k=0}^{2n}b_kx^k,
 \textrm{where}\
 b_k=\sum_{j=max(0,k-n)}^{min(k,n)}\bar q_jq_{k-j}\in\mathbb{R}.
 \end{equation}
 We remark that $q_{2n}$ is equal to the discriminant $\tilde p(t)$
 in our case.
 From Pogorui and Shapiro\cite{PS}, we know that all powers $x^j$, $j\in \mathbb{Z}$ of a
 quaternion $x$ have the form $x^j = \alpha x + \beta$ with real $\alpha$, $\beta$.
  In particular, $x^2=2({\rm{Re}} x)\ x-|x|^2$. In order to determine the numbers $\alpha$, $\beta$,
  Janovsk$\acute{a}$ and  Opfer in \cite{JO1} set up the following
  iteration:
  \begin{equation}\label{eq: iteration1}
\left\{\begin{array}{c}
        x^j=\alpha_jx+\beta_j,\ \alpha_j,\beta_j\in\mathbb{R},j=0,1,\ldots, \\
        \alpha_0=0, \beta_0=1, \\
       \alpha_{j+1}=2Rex\ \alpha_j+\beta_j, \\
        \beta_{j+1}=-|x|^2\alpha_j, j=0,1,\dots.
      \end{array}\right.
  \end{equation}
  Now by means of the first line of Iteration (\ref{eq:
  iteration1}), the polynomial $p(x)$ can be rewritten as

$$p(x)=\sum_{j=0}^nq_j(\alpha_jx+\beta_j)=
\left(\sum_{j=0}^nq_j\alpha_j\right)x+\left(\sum_{j=0}^nq_j\beta_j\right)\equiv
A(x)x+B(x),$$ where
\begin{equation}\label{formula3}
A(x)=\sum_{j=0}^nq_j\alpha_j,\  B(x)=\sum_{j=0}^nq_j\beta_j.
\end{equation}
With these in hand, we can state the algorithm given in \cite{JO1},
 as follows.\\

$\emph{Algorithm 2}$ (for solving the simple quaternionic equation
$q_nx^n+\cdots+q_1x+q_0=0$)

\

{\bf{Step 1.}}\  \ Write $q_nx^n+\cdots+q_1x+q_0=0$
 as $p(x)=a_nx^n+\cdots+a_1x+a_0=0$ with $a_n=1$. For this $p(x)$, compute
 the real coefficients $b_0$, $b_1$, $\ldots$, $b_{2n}$ of the companion polynomial
 $q_{2n}(x)$ by formula (\ref{companion}).\\

 {\bf{Step 2.}}\  \ Compute all $2n$ (real and complex) zeros of
 $q_{2n}(x)$,
  denote these zeros by $z_1$, $z_2$, $\ldots$, $z_{2n}$ and order them (if
 necessary) such that $z_{2j-1} = \overline{z_{2j}}$, $j = 1$, $2$, $\ldots$,
 $n$.\\

{\bf{Step 3.}}\  \ Define an integer vector { $\mathbf{ind}$} (like
{\it indicator}) of length $n$, and set all components to zero.
Define a quaternionic vector $Z$ of length $n$, and set all
components to zero.

$\mathbf{For}$ $\mathbf{j:=1:n}$ $\mathbf{do}$

(a) $\mathbf{Put}$ $z:=z_{2j-1}$.

(b) $\mathbf{if}$ $z$ is real, $Z(j):= z$; go to the next step;
$\mathbf{end}$ $\mathbf{if}$

(c) $\mathbf{Compute}$ $v := \overline{A(z)}B(z)$ by formula
(\ref{formula3}), with the help of Iteration (\ref{eq: iteration1}).

(d) $\mathbf{if}$ $v = 0$, put $\mathbf{ind}(j) := 1; Z(j) := z$; go
to the next step; $\mathbf{end}$ $\mathbf{if}$

(e) $\mathbf{if}$ $v\neq 0$, let $(v_1, v_2, v_3, v_4):= v$. Compute
$|w| := \sqrt{ v_2^2+ v_3^2+ v_4^2}$, and put

\begin{equation}\label{formula4}
Z(j) :=\left(\textrm{Re}z, -\frac{|\textrm{Im}z|}{|w|}v_2,
-\frac{|\textrm{Im}z|}{|w|}v_3,
-\frac{|\textrm{Im}z|}{|w|}v_4\right).
\end{equation}
$\mathbf{end }$ $\mathbf{if}$

$\mathbf{end}$ $\mathbf{for}$

\

In this algorithm,  corresponding to a real $z$ the expression
$Z(j)$ produces a real isolated zero $z$, corresponding to ``$v=0$"
the expression $Z(j)$ produces a spherical zero $[z]$, and
corresponding to ``$v\neq 0$" the expression $Z(j)$ produces an
isolated zero $\textrm{Re}z -\frac{|\textrm{Im}z|}{|w|}v_2\mathbf{i}
-\frac{|\textrm{Im}z|}{|w|}v_3\mathbf{j}
-\frac{|\textrm{Im}z|}{|w|}v_4\mathbf{k}$. The output results of
$Z(j)$ produce all zeros of polynomial $q_nx^n+\cdots+q_1x+q_0$.
Algorithm 2's original edition is Section 7 of \cite{JO1}, where
$|w| := \sqrt{ v_2^2+ v_3^3+ v_4^2}$ is false, which is a misprint.

\section{Examples}
 $\textbf{ Example 1}$ \ \ In $\mathbb{H}$, solve the equation
 $p(x):=\mathbf{i}x^3+\mathbf{j}x^2+\mathbf{k}x+1=0$.\\

  $\textit{ Solution 1.}$ Use our Algorithm 1 to do this.  Write the coefficients
   $\mathbf{i},\mathbf{j},\mathbf{k}$ into the form:
   $\mathbf{i}=\mathbf{i}+0\mathbf{j}, \ \mathbf{j}=0+1\mathbf{j},\  \mathbf{k}=0+\mathbf{i}\mathbf{j}$.
   Then the
   derived polynomials of this equation are
   $$\begin{array}{cc}
     f_1(t)=\mathbf{i}t^3+1, & f_2(t)=t^2+\mathbf{i}t, \\
     \bar f_1(t)=-\mathbf{i}t^3+1, & \bar f_2(t)=t^2-\mathbf{i}t,
   \end{array}$$
    the discriminant polynomial is
  $$\tilde p(t)=f_1\bar f_1+f_2\bar f_2=(t-\mathbf{i})(t+\mathbf{i})(t-e^{\mathbf{i}\frac{\pi}{4}})
  (t-e^{\mathbf{i}\frac{3\pi}{4}})(t-e^{\mathbf{i}\frac{5\pi}{4}})(t-e^{i\frac{7\pi}{4}}),$$
  which has no real root. It is easy to see that  $$T_1=\varnothing  \
  \textrm{and}\
  T_2=\{\eta_1= \mathbf{i},\eta_2=e^{\mathbf{i}\frac{\pi}{4}},
  \eta_3=e^{\mathbf{i}\frac{3\pi}{4}}\}.$$

  For $\eta_1= \mathbf{i}$, then $f_1(\mathbf{i})=2\neq 0$ and $f_2(\mathbf{i})=-2$, and we get an
  isolated zero by Formula (\ref{formula1}):
  $\frac{1}{8}\cdot (4\mathbf{i}-4\mathbf{i}-2\cdot(-2)\cdot 2\cdot 1\cdot
  \mathbf{k})=\mathbf{k}$;

  For
  $\eta_2=e^{\mathbf{i}\frac{\pi}{4}}=\frac{\sqrt{2}}{2}+\frac{\sqrt{2}}{2}\mathbf{i}$,
  then
  $f_1(\eta_2)=\frac{2-\sqrt{2}}{2}-\frac{\sqrt{2}}{2}\mathbf{i}$ and
  $f_2(\eta_2)=-\frac{\sqrt{2}}{2}+\frac{\sqrt{2}+2}{2}\mathbf{i}$,
 and  we get another isolated zero by Formula (\ref{formula1}):
  $\frac{\sqrt{2}}{2}+\frac{1}{2}\mathbf{i}+\frac{1}{2}\mathbf{k}$;

  Similarly for $\eta_3=e^{\mathbf{i}\frac{3\pi}{4}}$ we get the
  isolated zero:
  $\frac{\sqrt{2}}{2}+\frac{1}{2}\mathbf{i}+\frac{1}{2}\mathbf{k}$.

 Thus,  the solution set  is
 $\left\{\mathbf{k},
 \frac{\sqrt{2}}{2}+\frac{1}{2}\mathbf{i}+\frac{1}{2}\mathbf{k},
 -\frac{\sqrt{2}}{2}+\frac{1}{2}\mathbf{i}+\frac{1}{2}\mathbf{k}
  \right\}$.\\

   $\textit{ Solution 2.}$ Use our Algorithm 2 to do this.  Write
   $p(t)=(\mathbf{i}t^3+1)+(t^2+\mathbf{i}t)\mathbf{j}:=f_1+f_2\mathbf{j}$,
   then

   $g=\gcd(f_1, f_2)=t+\mathbf{i}$, $g_1=\mathbf{i}t^2+t-\mathbf{i}$,
   $g_2=t$,
   $\tilde g=g_1\bar g_1+g_2\bar g_2=t^4+1$,

   Compute the zeros of $g$: $\eta_1=-\mathbf{i}$.

   Compute the zeros of $\tilde g$:
   $\eta_2=e^{\mathbf{i}\frac{\pi}{4}}$, $\overline{\eta_2}$,
   $\eta_3=e^{\mathbf{i}\frac{3\pi}{4}}$, $\overline{\eta_3}$.

 Now for $\eta_i$($i=1,2,3$), compute $g_1(\overline{\eta_i})$, $|g_1(\overline{\eta_i})|^2$,
 $g_2(\overline{\eta_i})$, and  $|g_2(\overline{\eta_i})|^2$,  by formula (\ref{formulaomega}) we get
 the solution set of
 $\mathbf{i}x^3+\mathbf{j}x^2+\mathbf{k}x+1=0$:
 $$\left\{\mathbf{k},
 \frac{\sqrt{2}}{2}+\frac{1}{2}\mathbf{i}+\frac{1}{2}\mathbf{k},
 -\frac{\sqrt{2}}{2}+\frac{1}{2}\mathbf{i}+\frac{1}{2}\mathbf{k}
  \right\}.$$\\

 $\textbf{ Example 2}$ \ \ Solve the equation
 $x^3+x^2+x+1=0$ in $\mathbb{H}$.\\

   $\textit{ Solution 1.}$ We will use our method to do this first. Since $x^3+x^2+x+1=(x+1)(x+\mathbf{i})(x-\mathbf{i})$,
   from  Corollary \ref{corollary3}(a), we directly know the solution set is
   $\{-1\}\cup[\mathbf{i}]$.\\

$\textit{ Solution 2.}$ Now we use the  method in \cite{JO1} to do
 this.

 First Step. By formula (\ref{companion}), we get

 $q_6(x)=(x^3+x^2+x+1)^2=x^6+2x^5+3x^4+4x^3+3x^2+2x+1.$

 Second Step.
 Compute all zeros of $q_6(x)$:
 $-1, -1, \mathbf{i}, \mathbf{i}, -\mathbf{i}, -\mathbf{i}.$

 Third Step.  For $-1$ we get a real isolated zero $-1$. For $\mathbf{i}$ we need give
  the following expansion by Iteration (\ref{eq: iteration1}):
$$\mathbf{i}^0=0\mathbf{i}+1,\mathbf{i}^1=1\mathbf{i}+0,\mathbf{i}^2=0\mathbf{i}+(-1),\mathbf{i}^3=(-1)\mathbf{i}+0.$$
 Then by formula (\ref{formula3})
get $$A(\mathbf{i})=0+1+0+(-1)=0, B(\mathbf{i})=1+0+(-1)+0=0,$$ and
$v:=\overline{A(\mathbf{i})}B(\mathbf{i})=0.$ It leads to a
spherical zero $[\mathbf{i}]$. Finally, for $-\mathbf{i}$ we have to
repeat the same process as that made for $\mathbf{i}$:
$$(-\mathbf{i})^0=0(-\mathbf{i})+1, (-\mathbf{i})^1=1(-\mathbf{i})+0,$$ $$ (-\mathbf{i})^2=0(-\mathbf{i})+(-1),
(-\mathbf{i})^3=-1(-\mathbf{i})+0,$$
$$A(-\mathbf{i})=0+1+0+(-1)=0, B(-\mathbf{i})=1+0+(-1)+0=0,$$ and
$v:=\overline{A(-\mathbf{i})}B(-\mathbf{i})=0$. It also produces a
spherical zero $[-\mathbf{i}]$. Note that
$[-\mathbf{i}]=[\mathbf{i}]$, so the solution set is
$\{-1\}\cup[\mathbf{i}]$.

 \vskip 5pt  At last let us solve the same polynomial in Example 3.8 of
\cite{JO1}.

\vskip 5pt {\bf Example 3.} Find all zeros of $p(z) = z^6 +
\mathbf{j}z^5 + \mathbf{i}z^4 - z^2 - \mathbf{j}z - \mathbf{i}.$\\

{\it Solution 1.} We first use the method in this paper to do this.

We have $\mathbf{i}p(z)=\mathbf{i}z^6 + \mathbf{i}\mathbf{j}z^5 -z^4
- \mathbf{i}z^2 -  \mathbf{i}\mathbf{j}z +1.$ Then
$$\mathbf{i}p(t)=(\mathbf{i}t^6  -t^4 - \mathbf{i}t^2
+1)+(\mathbf{i}t^ 5 - \mathbf{i}t)\mathbf{j},$$
 $$g=t^4-1,\,\,\,g_1=\mathbf{i}t^2-1,\,\,\, g_2=\mathbf{i}t,$$ $$\tilde g=g_1\bar g_1+g_2\bar
g_2=(\mathbf{i}t^2-1)(-\mathbf{i}t^2-1)+\mathbf{i}t\cdot
(-\mathbf{i}t)=t^4+t^2+1.$$ The all distinct zeros (real or complex)
for $g$ and $\tilde g$ are respectively
 $\{1, -1, \mathbf{i},
-\mathbf{i}\}$ and $\{ e^{-\mathbf{i}\frac{\pi}{3}},
e^{\mathbf{i}\frac{\pi}{3}}, e^{-\mathbf{i}\frac{2\pi}{3}},
e^{\mathbf{i}\frac{2\pi}{3}}\}$. Now we take

 $\eta_1=e^{-\mathbf{i}\frac{\pi}{3}}=1/2-\sqrt{-3}/2$

$\eta_2=e^{-\mathbf{i}\frac{2\pi}{3}}=-1/2-\sqrt{-3}/2$,

$g_1(\bar
\eta_1)=\mathbf{i}e^{\mathbf{i}\frac{2\pi}{3}}-1=-1-\frac{\sqrt{3}}{2}-\frac{1}{2}\mathbf{i}$,
$|g_1(\bar \eta_1)|^2=2+\sqrt{3}$;

$g_2(\bar
\eta_1)=\mathbf{i}e^{\mathbf{i}\frac{\pi}{3}}=-\frac{\sqrt{3}}{2}+\frac{1}{2}\mathbf{i}$,
$|g_2(\bar \eta_1)|^2=1$;

$g_1(\bar
\eta_2)=\mathbf{i}e^{\mathbf{i}\frac{4\pi}{3}}-1=-1+\frac{\sqrt{3}}{2}-\frac{1}{2}\mathbf{i}$,
$|g_1(\bar \eta_2)|^2=2-\sqrt{3}$;

$g_2(\bar
\eta_2)=\mathbf{i}e^{\mathbf{i}\frac{2\pi}{3}}=-\frac{\sqrt{3}}{2}-\frac{1}{2}\mathbf{i}$,
$|g_2(\bar \eta_1)|^2=1$.\\
 So by formula
(\ref{formulaomega}), we get

$$\aligned \omega_1=&\frac{1}{3+\sqrt{3}}\{(\frac{1}{2}+\frac{\sqrt{3}}{2}\mathbf{i})+(2+\sqrt{3})(\frac{1}{2}
 -\frac{\sqrt{3}}{2}\mathbf{i})
 \\ &\hskip 1cm +2(-\frac{\sqrt{3}}{2}+\frac{1}{2}\mathbf{i})(-1-\frac{\sqrt{3}}{2}+\frac{1}{2}\mathbf{i})
 (-\frac{\sqrt{3}}{2})\mathbf{k}\}\\
=&\frac{1}{2}-\frac{1}{2}\mathbf{i}-\frac{1}{2}\mathbf{j}-\frac{1}{2}\mathbf{k}\endaligned$$

 $$\aligned \omega_2=&\frac{1}{3-\sqrt{3}}\{(-\frac{1}{2}+\frac{\sqrt{3}}{2}\mathbf{i})+(2-\sqrt{3})(-\frac{1}{2}
 -\frac{\sqrt{3}}{2}\mathbf{i})
 \\ &\hskip 1cm +2(-\frac{\sqrt{3}}{2}-\frac{1}{2}\mathbf{i})(-1+\frac{\sqrt{3}}{2}+\frac{1}{2}\mathbf{i})
 (-\frac{\sqrt{3}}{2})\mathbf{k}\}\\
 =&-\frac{1}{2}+\frac{1}{2}\mathbf{i}-\frac{1}{2}\mathbf{j}-\frac{1}{2}\mathbf{k}.\endaligned$$

Hence, the set of roots of $p(z)$ is
  $$\{1, -1\}\cup\{\frac{1}{2}-\frac{1}{2}\mathbf{i}-\frac{1}{2}\mathbf{j}-\frac{1}{2}\mathbf{k},
  -\frac{1}{2}+\frac{1}{2}\mathbf{i}-\frac{1}{2}\mathbf{j}-\frac{1}{2}\mathbf{k} \}\cup
[\mathbf{i}].$$\\

{\it Solution 2.} We   use the method in \cite{JO1} to redo this.\\

$\textit{Step 1}$ \ Compute the companion polynomial $q_{12}(x)$ by
formula (\ref{companion}):

 $$\aligned
              q_{12}(x)=&x^{12}+\mathbf{j}x^{11}+\mathbf{i}x^{10}-x^8-\mathbf{j}x^7-\mathbf{i}x^6 \\
              &-\mathbf{j}x^{11}+x^{10}-\mathbf{ji}x^9+\mathbf{j}x^7-x^6+\mathbf{ji}x^5 \\
              &-\mathbf{i}x^{10}-\mathbf{k}x^9+x^8+\mathbf{i}x^6+\mathbf{k}x^5-x^4 \\
              &-x^8-\mathbf{j}x^7-\mathbf{i}x^6+x^4+\mathbf{j}x^3+\mathbf{i}x^2\\
              &+\mathbf{j}x^7-x^6+\mathbf{ji}x^5-\mathbf{j}x^3+x^2-\mathbf{ji}x \\
              &+\mathbf{i}x^6+\mathbf{k}x^5-x^4-\mathbf{i}x^2-\mathbf{k}x+1\endaligned$$ $$\aligned
            =&x^{12}+0+x^{10}+0-x^8+0-2x^6+0-x^4+0+x^2+1\\
            =&x^{12}+x^{10}-x^8-2x^6-x^4+x^2+1.\endaligned$$

$\textit{Step 2 }$ \ Compute the zeros of   $q_{12}(x)$.\\
$$\aligned q_{12}(x)=&x^8(x^4+x^2-1)-x^6-x^4+x^2-x^6+1\\
=&x^8(x^4+x^2-1)-x^2(x^4+x^2-1)-x^6+1\\
=&(x^4+x^2-1)(x^8-x^2)-(x^6-1)\\
=&(x^4-1)(x^2+1)(x^6-1).\endaligned$$ The $12$ zeros of $q_{12}$ are
$$1, 1, -1, -1, \mathbf{i}, \mathbf{i}, -\mathbf{i}, -\mathbf{i},
\frac{1}{2}+\frac{\sqrt{3}}{2}\mathbf{i},
\frac{1}{2}-\frac{\sqrt{3}}{2}\mathbf{i},
-\frac{1}{2}+\frac{\sqrt{3}}{2}\mathbf{i},
-\frac{1}{2}-\frac{\sqrt{3}}{2}\mathbf{i}.$$

\

$\textit{Step 3 }$\  For $1$ and $-1$, we get isolated zeros: $1$,
$-1$.

For $x=\mathbf{i},-\mathbf{i}$, then
$$\begin{array}{ll}
  \mathbf{i}^0  =0\mathbf{i}+1 & (-\mathbf{i})^0=0(-i)+1 \\
  \mathbf{i}^1  =1\mathbf{i}+0 & (-\mathbf{i})^1=1(-\mathbf{i})+0\\
  \mathbf{i}^2  =0\mathbf{i}+(-1) & (-\mathbf{i})^2=0(-\mathbf{i})+(-1) \\
  \mathbf{i}^3  =(-1)\mathbf{i}+0 & (-\mathbf{i})^3=(-1)(-\mathbf{i})+0 \\
  \mathbf{i}^4  =0\mathbf{i}+1 & (-\mathbf{i})^4=0(-\mathbf{i})+1 \\
  \mathbf{i}^5  =1\mathbf{i}+0 & (-\mathbf{i})^5=1(-\mathbf{i})+0 \\
  \mathbf{i}^6  =0\mathbf{i}+(-1)  & (-\mathbf{i})^6=0(-\mathbf{i})+(-1)
  \\
  \\
    A(\mathbf{i})=  1\cdot 0+\mathbf{j}\cdot 1
    +\mathbf{i}\cdot 0 & A(-\mathbf{i}) = 1\cdot 0+\mathbf{j}\cdot 1
    +\mathbf{i}\cdot 0\\ \hskip .3cm +0\cdot (-1)
      +(-1)\cdot 0
    & \hskip .3cm +0\cdot (-1)
      +(-1)\cdot 0
    \\  \hskip .3cm +(-\mathbf{j})\cdot 1+(-\mathbf{i})\cdot 0=0 & \hskip .3cm +(-\mathbf{j})\cdot 1 +(-\mathbf{i})\cdot 0=0
   \\ \\
  v=\overline{A(\mathbf{i})}B(\mathbf{i})=0 &  v=\overline{A(-\mathbf{i})}B(-\mathbf{i})=0.
\end{array}$$
They produce the same  spherical zero $[\mathbf{i}]$ since
$[-\mathbf{i}]=[\mathbf{i}]$.

\

For $x=\frac{1}{2}+\frac{\sqrt{3}}{2}\mathbf{i}$,
$\frac{1}{2}-\frac{\sqrt{3}}{2}\mathbf{i}$, we have\\
$$\small
\begin{array}{ll}
 (\frac{1}{2}+\frac{\sqrt{3}}{2}\mathbf{i})^0=0(\frac{1}{2}+\frac{\sqrt{3}}{2}\mathbf{i})+1  & (\frac{1}{2}
 -\frac{\sqrt{3}}{2}\mathbf{i})^0=0(\frac{1}{2}-\frac{\sqrt{3}}{2}\mathbf{i})+1 \\
  (\frac{1}{2}+\frac{\sqrt{3}}{2}\mathbf{i})^1=1(\frac{1}{2}+\frac{\sqrt{3}}{2}\mathbf{i})+0 & (\frac{1}{2}
  -\frac{\sqrt{3}}{2}\mathbf{i})^1=1(\frac{1}{2}-\frac{\sqrt{3}}{2}\mathbf{i})+0\\
  (\frac{1}{2}+\frac{\sqrt{3}}{2}\mathbf{i})^2=1(\frac{1}{2}+\frac{\sqrt{3}}{2}\mathbf{i})+(-1) & (\frac{1}{2}
  -\frac{\sqrt{3}}{2}\mathbf{i})^2=1(\frac{1}{2}-\frac{\sqrt{3}}{2}\mathbf{i})+(-1) \\
  (\frac{1}{2}+\frac{\sqrt{3}}{2}\mathbf{i})^3=0(\frac{1}{2}+\frac{\sqrt{3}}{2}\mathbf{i})+(-1) & (\frac{1}{2}
  -\frac{\sqrt{3}}{2}\mathbf{i})^3=0(\frac{1}{2}-\frac{\sqrt{3}}{2}\mathbf{i})+(-1) \\
  (\frac{1}{2}+\frac{\sqrt{3}}{2}\mathbf{i})^4=(-1)(\frac{1}{2}+\frac{\sqrt{3}}{2}\mathbf{i})+0 & (\frac{1}{2}
  -\frac{\sqrt{3}}{2}\mathbf{i})^4=(-1)(\frac{1}{2}-\frac{\sqrt{3}}{2}\mathbf{i})+0 \\
  (\frac{1}{2}+\frac{\sqrt{3}}{2}\mathbf{i})^5=(-1)(\frac{1}{2}+\frac{\sqrt{3}}{2}\mathbf{i})+1 &
  (\frac{1}{2}-\frac{\sqrt{3}}{2}\mathbf{i})^5=(-1)(\frac{1}{2}-\frac{\sqrt{3}}{2}\mathbf{i})+1\\
  (\frac{1}{2}+\frac{\sqrt{3}}{2}\mathbf{i})^6=0(\frac{1}{2}+\frac{\sqrt{3}}{2}\mathbf{i})+1  &
  (\frac{1}{2}-\frac{\sqrt{3}}{2}\mathbf{i})^6=0(\frac{1}{2}-\frac{\sqrt{3}}{2}\mathbf{i})+1\\
  \\
    A(x)=1\cdot 0+\mathbf{j}\cdot -1 +\mathbf{i}\cdot -1&   A(x)= 1\cdot 0+\mathbf{j}\cdot -1   +\mathbf{i}\cdot -1 \\
    \hskip .3cm +0\cdot 0-1\cdot 1+-\mathbf{j}\cdot 1   +-\mathbf{i}\cdot 0&\hskip .3cm +0\cdot 0+\cdots\\
    \hskip .3cm  =-1-\mathbf{i}-2\mathbf{j} & \hskip .3cm  =-1-\mathbf{i}-2\mathbf{j} \end{array}$$
  $$\small
\begin{array}{ll}
    B(x)= 1\cdot 1+\mathbf{j}\cdot 1    +\mathbf{i}\cdot 0&   B(x)= 1\cdot 1+\mathbf{j}\cdot 1   \\
    \hskip .3cm -1\cdot -1+-\mathbf{j}\cdot 0   +-\mathbf{i}\cdot 1+0\cdot -1 & \hskip .3cm
    +\mathbf{i}\cdot 0 +\cdots \\\hskip .3cm +0\cdot -1=2-\mathbf{i}+\mathbf{j}  & \hskip .3cm =2-\mathbf{i}+\mathbf{j}
     \\ \\
  v=\overline{A(x)}B(x)=-3+3\mathbf{i}+3\mathbf{j}+3\mathbf{k} &
  v=\overline{A(x)}B(x)=-3+3\mathbf{i}+3\mathbf{j}+3\mathbf{k}\\
  |w|=3\sqrt{3}&|w|=3\sqrt{3}.
\end{array}$$
They  produce the same  isolated zero:
$\frac{1}{2}-\frac{\frac{\sqrt{3}}{2}}{3\sqrt{3}}\cdot
3\mathbf{i}-\frac{\frac{\sqrt{3}}{2}}{3\sqrt{3}}\cdot
3\mathbf{j}-\frac{\frac{\sqrt{3}}{2}}{3\sqrt{3}}\cdot 3\mathbf{k}$,
which is $0.5-0.5\mathbf{i}-0.5\mathbf{j}-0.5\mathbf{k}$.

\

Finally for $x=-\frac{1}{2}+\frac{\sqrt{3}}{2}\mathbf{i}$,
$-\frac{1}{2}-\frac{\sqrt{3}}{2}\mathbf{i}$, we still have to finish the following work:\\
$\small
\begin{array}{cc}
 (-\frac{1}{2}+\frac{\sqrt{3}}{2}\mathbf{i})^0=0(-\frac{1}{2}+\frac{\sqrt{3}}{2}\mathbf{i})+1  & (-\frac{1}{2}
 -\frac{\sqrt{3}}{2}\mathbf{i})^0=0(-\frac{1}{2}-\frac{\sqrt{3}}{2}\mathbf{i})+1 \\
  (-\frac{1}{2}+\frac{\sqrt{3}}{2}\mathbf{i})^1=1(-\frac{1}{2}+\frac{\sqrt{3}}{2}\mathbf{i})+0 & (-\frac{1}{2}
  -\frac{\sqrt{3}}{2}\mathbf{i})^1=1(-\frac{1}{2}-\frac{\sqrt{3}}{2}\mathbf{i})+0\\
  (-\frac{1}{2}+\frac{\sqrt{3}}{2}\mathbf{i})^2=-1(-\frac{1}{2}+\frac{\sqrt{3}}{2}\mathbf{i})+(-1) & (-\frac{1}{2}
  -\frac{\sqrt{3}}{2}\mathbf{i})^2=-1(-\frac{1}{2}-\frac{\sqrt{3}}{2}\mathbf{i})+(-1) \\
  (-\frac{1}{2}+\frac{\sqrt{3}}{2}\mathbf{i})^3=0(-\frac{1}{2}+\frac{\sqrt{3}}{2}\mathbf{i})+1 & (-\frac{1}{2}
  -\frac{\sqrt{3}}{2}\mathbf{i})^3=0(-\frac{1}{2}-\frac{\sqrt{3}}{2}\mathbf{i})+1\\
  (-\frac{1}{2}+\frac{\sqrt{3}}{2}\mathbf{i})^4=1(-\frac{1}{2}+\frac{\sqrt{3}}{2}\mathbf{i})+0 & (-\frac{1}{2}
  -\frac{\sqrt{3}}{2}\mathbf{i})^4=1(-\frac{1}{2}-\frac{\sqrt{3}}{2}\mathbf{i})+0 \\
  (-\frac{1}{2}+\frac{\sqrt{3}}{2}\mathbf{i})^5=(-1)(-\frac{1}{2}+\frac{\sqrt{3}}{2}\mathbf{i})+(-1) &
  (-\frac{1}{2}-\frac{\sqrt{3}}{2}\mathbf{i})^5=(-1)(-\frac{1}{2}-\frac{\sqrt{3}}{2}\mathbf{i})+-1 \\
  (-\frac{1}{2}+\frac{\sqrt{3}}{2}\mathbf{i})^6=0(-\frac{1}{2}+\frac{\sqrt{3}}{2}\mathbf{i})+1  & (-\frac{1}{2}
  -\frac{\sqrt{3}}{2}\mathbf{i})^6=0(-\frac{1}{2}-\frac{\sqrt{3}}{2}\mathbf{i})+1  \\
  \\
   {\small{\small \begin{array}{c}
    A(x)=1\cdot 0+\mathbf{j}\cdot -1
    +\mathbf{i}\cdot 1+0\cdot 0 \\ \hskip .3cm
    -1\cdot -1+-\mathbf{j}\cdot 1
    +-\mathbf{i}\cdot 0\\ \hskip .3cm =1+\mathbf{i}-2\mathbf{j}
  \end{array}}}
   & {\small   {\small                                      \begin{array}{c}
    A(x)=
 1\cdot 0+\mathbf{j}\cdot -1
    +\mathbf{i}\cdot 1\\ \hskip .3cm
    +0\cdot 0+\cdots \\ \hskip .3cm =1+\mathbf{i}-2\mathbf{j}
  \end{array}}}  \\ \\
\begin{array}{c}
    B(x)=
 1\cdot 1+\mathbf{j}\cdot -1
    +\mathbf{i}\cdot 0 +0\cdot 1\\ \hskip .3cm
    -1\cdot -1+-\mathbf{j}\cdot 0
    +-\mathbf{i}\cdot 1\\ \hskip .3cm  =2-\mathbf{i}-\mathbf{j}
  \end{array}& \begin{array}{c}
    B(x)=
 1\cdot 1+\mathbf{j}\cdot -1
    +\mathbf{i}\cdot 0\\ \hskip .3cm +0\cdot 1
    +\cdots \\ \hskip .3cm =2-\mathbf{i}-\mathbf{j}
  \end{array}
  \\ \\
  v=\overline{A(x)}B(x)=3-3\mathbf{i}+3\mathbf{j}+3\mathbf{k} &
  v=\overline{A(x)}B(x)=3-3\mathbf{i}+3\mathbf{j}+3\mathbf{k}\\ \\
  |w|=3\sqrt{3}&|w|=3\sqrt{3}.
\end{array}$\\
 By formula (\ref{formula4}), They produce the same isolated zero:
$-0.5+0.5\mathbf{i}-0.5\mathbf{j}-0.5\mathbf{k}$. The set of roots
of $p(z)$ is
  $$\{1,\,\, -1, \,\, 0.5-0.5\mathbf{i}-0.5\mathbf{j}-0.5\mathbf{k},\,\,  -0.5+0.5\mathbf{i}-0.5\mathbf{j}-0.5\mathbf{k}
  \}\cup
[\mathbf{i}].$$

In  Example $3.8$ of \cite{JO1}, the authors said that the $12$
zeros of $q_{12}$ were 1 (twice), -1 (twice), $\pm \mathbf{i}$
(twice each), $0.5(\pm 1\pm \mathbf{i})$. We point out that $0.5(\pm
1\pm \mathbf{i})$ should be corrected as $0.5(\pm 1\pm
\sqrt{3}\mathbf{i})$.

 \section{Numerical consideration and algorithm comparison}
 The polynomial in Example 1 has the property that its discriminant polynomial $\tilde p$
  can be factored by hands as $(t-\mathbf{i})(t+\mathbf{i})(t-e^{\mathbf{i}\frac{\pi}{4}})
  (t-e^{\mathbf{i}\frac{3\pi}{4}})(t-e^{\mathbf{i}\frac{5\pi}{4}})(t-e^{\mathbf{i}\frac{7\pi}{4}})$.
  In general, one can not always expect to  get the zeros of the discriminant
  polynomial by  hands, one has to rely on machine
  computations. In Algorithm 1, if we
compute the zeros of the discriminant polynomial, $\tilde p(x)$, of
Example 3 in Section 4 by MATLAB 7.12.0(R2011a), we find the 12
zeros are as following:
\begin{equation}\label{root table}
\begin{array}{ccc}
    1 & -1.000000000000001 & +0.000000002066542i \\
    2 & -1.000000000000001 & -0.000000002066542i \\
    3 & -0.500000000000000 & +0.866025403784440i \\
    4 & -0.500000000000000 & -0.866025403784440i  \\
    5 & \ \ 0.999999990102304 & +0.000000000000000i \\
    6 & \ \ 1.000000009897694 & -0.000000000000000i \\
    7 & \ \ 0.500000000000000 & +0.866025403784439i \\
    8 & \ \ 0.500000000000000 & -0.866025403784439i\\
    9 & \ \ 0.000000000016075 & +1.000000008531051i \\
    10 & \ \ 0.000000000016075 & -1.000000008531051i \\
    11 & -0.000000000016074 & +0.999999991468949i \\
    12 & -0.000000000016074 & -0.999999991468949i
  \end{array}
  \end{equation}
  which are nearly same as Table 1 in \cite{JO1}. This is simply because
our $\tilde p(t)$ is exactly the companion polynomial $q_{12}(t)$.
Hence,  the similar measures to \cite{JO1} can be made to obtain
machine precision for the zeros with multiplicity 2(e.g., by
Newton's method). It is interesting to note that the discriminant
polynomial given in this paper is always equal to the companion
polynomial after considering the variable as a real variable.

In Algorithm 1, if we have found by MATLAB the zeros of discriminant
polynomial, we usually by comparing
$|f_1(\eta_i)|^2+|f_2(\eta_i)|^2$ with
$|f_1(\overline{\eta_i})|^2+|f_2(\overline{\eta_i})|^2$ decide which
one $\omega_i$ should take between (\ref{zero1}) and (\ref{zero2}).
If $|f_1(\eta_i)|^2+|f_2(\eta_i)|^2$ is greater, then $\omega_i$
takes (\ref{zero1}), otherwise takes (\ref{zero2}). However, our
Algorithm 1' does not need to make such a decision.

In Step 2 of Algorithm 1, we need decide whether a zero $\xi$ is
real. In our experience (the same as \cite{JO1}), a test of the form
$|{\rm{Im}}\xi|<10^{-5}$ is appropriate. There is another delicate
decision to make in our Algorithm 1.  That is, in Step 2 one has to
decide whether a zero $\eta$ satisfies $f_1(\eta)=f_2(\eta)=\bar
f_1(\eta)=\bar f_2(\eta)=0$. For this, a test for $f(\eta)=0$ can be
carried out in the form $|f(\eta)|<10^{-10}$(This was also used to
test $v=0$ in \cite{JO1}). Our Algorithm 1' avoids making this
delicate decision.

Both Algorithm 1 and Algorithm 2 need to find all zeros of the
companion polynomial(note that the discriminant polynomial is always
equal to the companion polynomial). Unlike Algorithm 2,  our
algorithms  no longer need the  use of any iterations. Using our
algorithms, one can easily produce all zeros of the simple
quaternionic polynomial from the zeros of the companion polynomial.
Our algorithm has less workload, as shown in Section 4.
 This has a
great advantage for a simple quaternionic polynomial with high
degree. In fact, when one use Iteration (\ref{eq: iteration1}) to
compute $A(z)$ and $B(z)$ for a nonreal complex root $z$ with
$|z|\neq 1$, if the degree of the simple quaternionic polynomial
considered is high, the workload is huge, even interferes one's
deciding whether $v$ is zero.

In [Page 252, 9] D. Janovsk$\acute{a}$ and  G. Opfer said  ``We made
some hundred tests with polynomials $p_n$ of degree $n \leq 50$ with
random integer coefficients in the range $[-5, 5]$ and with real
coefficients in the range $[0, 1]$.  In all cases we found only
(nonreal) isolated zeros $z$. The test cases showed $|p_n(z)|
\thickapprox 10^{-13}$. Real zeros and spherical zeros did not show
up. If n is too large, say $n \thickapprox 100$, then usually it is
not any more possible to find all zeros of the companion polynomial
by standard means (say roots in MATLAB) because the coefficients of
the companion polynomial will be too large." But this can be easily
avoid by using Algorithm 1' in many cases.  For example, to solve a
real coefficient polynomial of degree of $50$, we can use Corollary
3 to do this easily.

\vskip 5pt {\bf Example 4.} Find all zeros of $p(z) = z^{1000} -2$
in $\mathbb{H}$.

{\it Solution.} We know that the complex solution set of $p(t)$ is
$$\{\pm\sqrt[1000]2\}\cup_{k=1}^{499}\{\eta_k=\sqrt[1000]2e^{\frac{k\pi\mathbf{i}}{500}}, \bar\eta_k\}.$$
Applying Corollary 3, we obtain the solution set for $p(z)$:
$$\{\pm1\}\cup_{k=1}^{499}[\sqrt[1000]2e^{\frac{k\pi\mathbf{i}}{500}}].$$

According to the above  comments D. Janovsk$\acute{a}$ and  G. Opfer
made on their method, it is impossible to find all solutions of this
example since the degree of $p(z)$ is much larger than $100$ and all
roots except for $\pm \sqrt[1000]2$ are spherical.

 \

\section{Conclusion}

The method introduced in Section 2  can also be used to find all
zeros of simple polynomials with quaternionic coefficients located
on only right side of the power of the variable, since
$x^nq_n+\cdots+xq_1+q_0=0$ is equivalent to $\overline{q_n}\
\overline{x}^n+\cdots+\overline{q_1}\
\overline{x}+\overline{q_0}=0$. In retrospect, Eilenberg and Niven
in \cite{EN} for the first time proved that quaternionic
$q_nx^n+\cdots+q_1x+q_0=0$ has always a root, but the method they
used is topological. For this result, here we have provided
 a new  alternate proof, without using
any topological tools. Based on the derived polynomial and
discriminant polynomial introduced in this paper, we have given the
zeros in an explicit form for a simple quaternionic polynomial.
Comparing with the methods provided in \cite{JO1} to seek roots of
$q_nx^n+\cdots+q_1x+q_0=0$, our method is different, and has great
advantages. Using our method, we have recovered several known
results, and deduced several very interesting consequences
concerning seeking the zeros of a simple quaternionic polynomial
which do not seem to be deduced easily from the results in
\cite{JO1}.
%(e.g. the results of \cite{PS})

We would like to conclude our paper by making the following remark.
We hope the method in the proof of our main theorem can be useful to
seeking a method to find all solutions of a quaternionic polynomial
equation with coefficients on both sides of the powers of the
variable.

\newcommand{\G}{\Gamma}
\newcommand{\ga}{\gamma}

\vspace{0.5cm}

\vspace{1mm}

{\small \noindent L.G. Feng:   Department of Mathematics and System
Science, National University of Defense Technology, Changsha 410073,
Hunan, P.R.China. Email: fenglg2002@sina.com}

{\small\vspace{0.2cm} \noindent K.M. Zhao: Department of
Mathematics, Wilfrid Laurier University, Waterloo, ON, Canada N2L
3C5, and College of Mathematics and Information Science, Hebei
Normal (Teachers) University, Shijiazhuang 050016, Hebei, P. R.
China. Email: kzhao@wlu.ca}


\begin{thebibliography}{99999}
\bibitem{A}  S.L. Adler, {\em Quaternionic quantum field theory}, Commun. Math.
Phys., 104(1986), pp. 611-656.
\bibitem{BM} N. Le Bihan and J. Mars,  {\em Singular value decomposition of quaternion
 matrices:
  a new tool for vector-sensor signal processing},
Signal Process, 84(2004), pp. 1177--1199.
\bibitem{EN} S. Eilenberg
and I. Niven, {\em The 'Fundamental Theorem of Algebra'  for
 quaternions}, Bull. Amer. Math. Soc., 50(1944), pp. 246-248.

%\bibitem{GM} B. Gordon and T. S. Motzkin, {\em On the zeros of polynomials over
%division rings}, Trans. Amer. Math. Soc., 116 (1965), pp. 218¨C226.

\bibitem{GH} A. Gsponer and J.-P. Hurni, {\em Quaternions in Mathematical Physics (2):
Analytical Bibliography}, Independent Scientific Research Institute
report ISRI-05-05.26, 2008; also available online at
http://arxiv.org/abs/math-ph/0511092v3.
\bibitem{GSt}  G. Gentili and C. Stoppato, {\em Zeros
of regular functions and polynomials of a quaternionic variable},
Michigan Math. J., 56(2008), pp. 655-667.
\bibitem{GS} G. Gentili and D. C. Struppa, {\em On the
multiplicity of zeros of polynomials with quaternionic
coefficients}, Milan J. Math., 76(2007), pp. 1-10.
\bibitem{GSV}  G. Gentili, D.C. Struppa and F. Vlacci, {\em The
fundamental theorem of algebra for Hamilton and Cayley numbers},
Math. Z., 259(2008), pp. 895-902.

\bibitem{JO5}D. Janovsk$\acute{a}$ and G. Opfer,
{\em Linear equations in quaternions}, Numerical mathematics and
advanced applications,  Springer, Berlin, 2006, pp. 945-953.
\bibitem{JO1} D. Janovsk$\acute{a}$ and  G. Opfer, {\em  A note on
 the computation of all zeros of simple
 quaternionic polynomials}, SIAM J. Numer. Anal.,
48(1)(2010), pp. 244-256.
\bibitem{JO2} D. Janovsk$\acute{a}$ and G. Opfer, {\em The classification
and  the computation of the zeros of quaternionic two-sided
polynomials}, Numer. Math., 115(1)(2010), pp. 81-100.
\bibitem{JO3}  D. Janovsk$\acute{a}$ and  G. Opfer, {\em Computing
quaternionic  roots in Newton's method}, Electron. Trans. Numer.
Anal., 26(2007), pp. 82-102.
\bibitem{JO4}  D. Janovsk$\acute{a}$ and  G.
Opfer, {\em Linear equations in quaternionic variables}, Mitt. Math.
Ges. Hamburg, 27(2008), pp. 223-234.

\bibitem{DDL}  S. De Leo,  G. Ducati and V. Leonardi,  {\em Zeros of
unilateral  quaternionic polynomials}, Electron. J. Linear Algebra,
15(2006), pp. 297-313.


\bibitem{N} I. Niven,  {\em Equations in quaternions}, Amer. Math. Monthly, 48(1941),
pp. 654-661.
\bibitem{N2}I. Niven, {\em The roots of a quaternion}, Amer. Math. Monthly, 49(1942),
pp. 386-388.

\bibitem{PS}  A. Pogorui and M. Shapiro, {\em On the structure of the set of
zeros  of quaternionic polynomials}, Complex Var. Elliptic Funct.,
49(2004), pp. 379-389.

\bibitem{PW}  S. Pumpl$\ddot{u}$n and S. Walcher,  {\em On the zeros of polynomials
over quaternions}, Comm. Algebra, 30(2002), pp. 4007-4018.


\bibitem{SPV}  R. Ser$\hat{o}$dio, E. Pereira and   J. Vit$\acute{o}$ria, {\em Computing the zeros
of quaternionic polynomials}, Comput. Math. Appl., 42(2001), pp.
1229-1237.

\bibitem{SS} R. Ser$\hat{o}$dio and L.S. Siu, {\em Zeroes of
quaternion polynomials}, Appl. Math. Lett., 14(2)(2001), pp.
237-239.


\bibitem{T} N. Topuridze,{\em  On the roots of polynomials over division algebras}, Georgian Math. J., 10(4)(2003),
pp. 745-762.


\bibitem{ZFZ} F.Z. Zhang, {\em Quaternions and
matrices of quaternions}, Linear Algebra Appl., 251(1997), pp.
21-57.
\bibitem{ZY} F.Z. Zhang and Y. Wei, {\em Jordan canonical form of a partitioned
complex  matrix and its application to real quaternion matrices},
Comm.  Algebra, 29(6)(2001), pp. 2363-2375.
%\bibitem[Z]{Z}
\end{thebibliography}
\end{document}